\theoremstyle{plain}
\newtheorem{thm}{Theorem}[section]
\newtheorem*{non-thm}{Theorem}
\newtheorem{lem}[thm]{Lemma}
\newtheorem{prop}[thm]{Proposition}
\newtheorem{cor}[thm]{Corollary}
\newtheorem{conj}[thm]{Conjecture}
\newtheorem{ques}[thm]{Question}
\theoremstyle{definition}
\newtheorem{df}[thm]{Definition}
\newtheorem*{non-df}{Definition}
\newtheorem{rmk}[thm]{Remark}
\newcommand{\comment}[1]{}
\newcommand{\abs}[1]{\left|#1\right|}
\newcommand{\ar}[1]{a_{#1}}
\newcommand{\An}{\mathbb{A}^N}
\newcommand{\bbN}{\mathbb{N}}
\newcommand{\bbP}{\mathbb{P}}
\newcommand{\bbR}{\mathbb{R}}
\newcommand{\calC}{\mathcal{C}}
\newcommand{\cone}{\textnormal{cone}}
\newcommand{\conv}{\textnormal{conv}}
\newcommand{\emp}{\emptyset}
\renewcommand{\int}[1]{\left[#1\right]}
\newcommand{\one}{\mathbf{1}}
\newcommand{\Par}[1]{\left(#1\right)}
\renewcommand{\phi}{\varphi}
\newcommand{\Rd}{\mathbb{R}^d}
\newcommand{\RN}{\mathbb{R}^N}
\newcommand{\set}[1]{\left\{#1\right\} }
\newcommand{\spa}{\text{span}}
\newcommand{\su}{\subseteq}
\newcommand{\sub}{\subset}
\numberwithin{equation}{section}
\newcounter{figure_num}
\begin{document}

\title{Radon Partitions of Random Gaussian Polytopes}
\author{Moshe J. White \thanks{Research supported in part by the Israel Science Foundation (Grant No. 2669/21) and in part by the ERC Advanced Grant (Grant No. 834735)} \\
Institute of Mathematics,\\
Hebrew University, Jerusalem, Israel}
\date{}
\maketitle
\begin{abstract}
In this paper we study a probabilistic framework for Radon partitions, where our points are chosen independently from the $d$-dimensional normal distribution. For every point set we define a corresponding Radon polytope, which encodes all information about Radon partitions of our set - with Radon partitions corresponding to faces of the polytope. This allows us to derive expressions for the probability that a given partition of $N$ randomly chosen points in $\mathbb{R}^d$ forms a Radon partition. These expressions involve conic kinematic formulas and intrinsic volumes, and in general require repeated integration, though we obtain closed formulas in some cases. This framework can provide new perspectives on open problems that can be formulated in terms of Radon partitions, such as Reay's relaxed Tverberg conjecture.
\end{abstract}

\section{Introduction}

For many topics in mathematics, it is often natural to gain insight by exploring random objects, and convex polytopes are no exception. There are a number of methods to produce random polytopes. One widely explored method is selecting vertices uniformly at random from a convex body. Questions regarding the distribution of properties of these random polytopes, such as the volume and face numbers, go as far back as a question asked by Sylvester in 1864. Another method produces a ``Gaussian polytope'', which is the convex hull of $N$ points independently using the $d$-dimensional standard normal distribution (see \cite{Rei} for a survey on random polytopes).

Radon's classic theorem states that any $x_1,\ldots,x_N\in\Rd$ may be partitioned into two sets with intersecting convex hulls, provided that $N\geq d+2$. We explore Radon partitions of the vertices of Gaussian polytopes -  vertices $x_1,\ldots,x_N\in\Rd$ chosen independently from a standard normal distribution. We succeed in principle in producing an expression for the probability that a given partition of such points is a Radon partition (depending on $N,d$, and the number of points in each part - see Proposition \ref{formula_gen}). The goal of this paper is to introduce this model as a natural random setting for exploring problems related to Radon's theorem.

As we explore in Section 2, all information about Radon partitions of a configuration of $N$ points in $\Rd$ is encoded in a ``Radon polytope'', which is obtained as the intersection between a fixed polytope (depending only on $N$) and a linear space $V$ (whose dimension depends on $N,d$). A partition of our points is a Radon partition if and only if $V$ intersects nontrivially with a certain cone (determined by the partition). In Section 3, we argue that vertices of the Gaussian polytope are a natural way to induce random Radon partitions, because in this setting the space $V$ is chosen \emph{uniformly} at random. We can therefore apply conic Kinematic formulas and obtain expressions for the probabilities that various Radon partitions will appear. In general these expressions involve repeated integration and thus rely on numeric approximation, but in some cases we obtain closed formulas. In Section 4 we give a very small example of open problems that can be described using Radon partitions.

For such problems we gain two new possible approaches: first, the problem is interpreted in terms of Radon polytopes. Second, we obtain a probabilistic version of our problem, which may be of independent interest.

\section{Radon polytopes}

In this paper, a polytope is the convex hull of a finite collection of points, and a cone is short for a convex polyhedral cone. We will make use of conic intrinsic volumes, and refer to \cite{AL} for all related definitions.\\
We will give a description of partitions of $N$ points using a polytope in real $(N-1)$-dimensional space, but the standard coordinate system for $\bbR^{N-1}$ isn't very convenient. Instead, we will use $N$ coordinates with a single dependency between them:

\begin{df}
For every $N\in\bbN$, denote
$$\An=\set{(\alpha_1,\ldots,\alpha_N)\in\RN\ :\ \sum_{i=1}^N \alpha_i=0}.$$
\end{df}
Informally, we consider $\An$ as the space of possible ``affine dependencies'' among $N$ points. Note that as a vector space, $\An$ is just $\bbR^{N-1}$. Throughout this paper, $N$ will correspond to the number of points to be partitioned, not the dimension of spaces.

\begin{df}
Let $e_1,\ldots,e_N$ denote the standard basis for $\RN$, and let $[N]$ denote the first $N$ natural numbers, $\set{1,2,\ldots,N}$. Given two disjoint subsets $A,B\sub[N]$, we define the \emph{partition cone} of $A$ and $B$ as
$$C(A,B)=\cone\set{e_i-e_j\ :\ i\in A,\ j\in B},$$
and the \emph{partition face} of $A$ and $B$ as
$$F(A,B)=\conv\set{e_i-e_j\ :\ i\in A,\ j\in B}.$$
\end{df}
We mention a basic property of partition cones:
\begin{lem} \label{partition_cone_lem}
Suppose that for every $i\in I$, we have a partition cone $C(A_i,B_i)$. Then
$$\bigcap_{i\in I} C(A_i,B_i)=C(\cap_i A_i, \cap_i B_i),$$
under the convention that $C(A,B)=\set{0}$ if $A=\emp$ or $B=\emp$.
\end{lem}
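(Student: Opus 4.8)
The plan is to establish an explicit coordinate characterization of a single partition cone and then read the intersection formula off the signs of coordinates. First I would record the elementary description of $C(A,B)$ when $A,B\neq\emp$: a vector $x=(x_1,\ldots,x_N)$ lies in $C(A,B)$ if and only if $x_k\ge 0$ for $k\in A$, $x_k\le 0$ for $k\in B$, and $x_k=0$ for $k\notin A\cup B$ (the condition $\sum_k x_k=0$ is then automatic, since each generator $e_i-e_j$ has coordinate sum zero). The ``only if'' direction is immediate: every generator $e_i-e_j$ with $i\in A$, $j\in B$ has a $+1$ in an $A$-coordinate, a $-1$ in a $B$-coordinate, and zeros elsewhere, and a nonnegative combination preserves these sign constraints.

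The substantive step is the ``if'' direction, which I would prove by a transportation/flow argument. Given $x$ with the stated sign pattern, set $S=\sum_{k\in A}x_k=-\sum_{k\in B}x_k\ge 0$. If $S=0$ then $x=0$ and we are done with the empty combination; otherwise choose $i\in A$ with $x_i>0$ and $j\in B$ with $x_j<0$, subtract $\min(x_i,-x_j)\cdot(e_i-e_j)$, and note that the result retains the same sign pattern while strictly shrinking the support. Inducting on the support size then writes $x$ as a nonnegative combination of the generators, so $x\in C(A,B)$. This is where essentially all the work lies; the rest is bookkeeping, and I expect this flow feasibility to be the only real obstacle.

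With the characterization available, the two inclusions follow quickly. For ``$\supseteq$'' no characterization is even needed: since $\cap_i A_i\su A_i$ and $\cap_i B_i\su B_i$, every generator of $C(\cap_i A_i,\cap_i B_i)$ is already a generator of each $C(A_i,B_i)$, giving $C(\cap_i A_i,\cap_i B_i)\su\bigcap_i C(A_i,B_i)$; this also respects the convention, since if either intersection is empty the left-hand cone is $\set{0}$, which lies in every cone. For ``$\su$'', take $x\in\bigcap_i C(A_i,B_i)$ and let $P=\set{k:x_k>0}$ and $Q=\set{k:x_k<0}$. The ``only if'' direction applied to each $i$ yields $P\su A_i$ and $Q\su B_i$ for all $i$, hence $P\su\cap_i A_i$ and $Q\su\cap_i B_i$. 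Thus $x$ has exactly the sign pattern required for membership in $C(\cap_i A_i,\cap_i B_i)$, and the ``if'' direction places $x$ there.

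Finally I would dispatch the degenerate case of the convention. If, say, $\cap_i A_i=\emp$, then any $x$ in the intersection satisfies $P\su\cap_i A_i=\emp$, so $P=\emp$, forcing $x\le 0$ coordinatewise with $\sum_k x_k=0$ and hence $x=0$; this matches the right-hand side $\set 0$. The key conceptual point throughout is that membership in a partition cone depends only on the positive and negative supports of a vector, so intersecting cones simply intersects the allowed index sets.
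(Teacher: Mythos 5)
Your overall strategy is the same one the paper relies on: the paper states precisely this sign-pattern characterization (for vectors $\alpha\in\An$) immediately after the lemma and treats the lemma as an immediate consequence, leaving the characterization itself unproved. Your transportation/flow argument supplies that missing verification, and your derivation of both inclusions and of the degenerate case from the characterization is sound.

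There is, however, one genuine slip: you state the characterization for an arbitrary $x\in\RN$, and in that generality its ``if'' direction is false. Take $N=2$, $A=\set{1}$, $B=\set{2}$, and $x=e_1=(1,0)$: all the sign conditions hold, yet $C(A,B)=\cone\set{e_1-e_2}$ consists of the vectors $(t,-t)$ with $t\ge 0$, so $x\notin C(A,B)$. The condition $\sum_k x_k=0$ is \emph{not} ``automatic'' in the direction where you need it: it is automatic for vectors already known to lie in the cone (your easy direction), but it must be imposed as a hypothesis in the converse. Your own flow argument tacitly does impose it --- the opening identity $S=\sum_{k\in A}x_k=-\sum_{k\in B}x_k$ holds exactly when the total coordinate sum vanishes --- so what your induction actually proves is the correct statement for $x\in\An$, not the statement you wrote; this is exactly why the paper quantifies its characterization over $\alpha\in\An$. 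The lemma itself is unharmed, since every vector to which you apply the ``if'' direction already lies in some partition cone and hence in $\An$ (all generators have coordinate sum zero). So once you restate the equivalence with the hypothesis $\sum_k x_k=0$, your proof of both inclusions and of the convention case goes through verbatim.
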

Given $\alpha=(\alpha_1,\ldots,\alpha_N)\in\An$, we have $\alpha\in C(A,B)$ if and only if $\alpha_i\geq 0$ for every $i\in A$, $\alpha_j\leq 0$ for every $j\in B$, and $\alpha_k=0$ for every $k\notin A\cup B$. We additionally have $\alpha\in F(A,B)$ if and only if 
$$\sum_{i\in A} \alpha_i=1=\sum_{j\in B} -\alpha_j\ .$$

Recall the following proof of Radon's classic theorem: if $x_1,\ldots,x_N\in\Rd$ and $N\geq d+2$, the points are affinely dependent. That is, there exists $0\neq\alpha\in\An$ such that $\sum_i \alpha_i x_i=0$ and $\sum_i \alpha_i=0$. Then $\alpha\in C(A,B)$ for some disjoint $A,B\sub[N]$: $A$ (and $B$) are simply the sets of indices of negative (and positive) coordinates of $\alpha$. W.l.o.g. we may assume that $\alpha$ is rescaled to satisfy $\alpha\in F(A,B)$. Now we rearrange the affine dependence as
$$\sum_{i\in A} \alpha_i x_i=\sum_{j\in B} -\alpha_j x_j\ ,$$
and this proves that $\conv\set{x_i:i\in A}\cap\conv\set{x_j:j\in B}\neq\emp$. We call this a Radon partition for $x_1,\ldots,x_N$:
\begin{df}
A partition in $[N]$ is an ordered pair $(A,B)$, where $A,B\sub[N]$ are disjoint. Such $(A,B)$ is a Radon partition for $x_1,\ldots,x_N\in\Rd$, if
$$\conv\set{x_i\ :\ i\in A}\cap\conv\set{x_j\ :\ j\in A}\neq\emp.$$
\end{df}
\begin{rmk}
We refer to a partition \emph{in} $[N]$, to emphasize that $A\cup B$ might be a proper subset of $[N]$.\\
We furthermore go against common terminology and regard a partition as an ordered pair - the first set specifies indices with positive values, and the second corresponds negative values. However the connection between $(A,B)$ and $(B,A)$ should be clear, e.g. $C(B,A)=-C(A,B)$, a partition $(A,B)$ is a Radon partition if and only if $(B,A)$ is a Radon partition, etc.
\end{rmk}

The proof draws a strong connection between \emph{affine dependencies} and \emph{Radon partitions} among any configuration of $N$ points in any $\Rd$. This motivates the following definition: 
\begin{df} \label{complete_polytope}
Let $2\leq N\in\bbN$. The \emph{complete Radon polytope} for $N$ points is
$$P_N=\set{(\alpha_1,\ldots,\alpha_N)\in\An\ :\ \sum_{i=1}^N \abs{\alpha_i}\leq 2}.$$
\end{df}

Note that $P_N$ is a polytope of dimension $N-1$, and the proper faces of $P_N$ are precisely the partition faces $\set{F(A,B)}$ (we may take $A=B=\emp$ to obtain the empty face). See Figure \ref{radon_polytopes} for the polytopes $P_3$ and $P_4$.

\begin{center}
\refstepcounter{figure_num}\label{radon_polytopes}

\begin{tikzpicture}

\coordinate (p1_2) at (0:2.5);
\filldraw (p1_2) circle (2pt) node[right] {$12$};
\coordinate (p1_3) at (60:2.5);
\filldraw (p1_3) circle (2pt) node[above right] {$13$};
\coordinate (p2_3) at (120:2.5);
\filldraw (p2_3) circle (2pt) node[above left] {$23$};
\coordinate (p2_1) at (180:2.5);
\filldraw (p2_1) circle (2pt) node[left] {$21$};
\coordinate (p3_1) at (240:2.5);
\filldraw (p3_1) circle (2pt) node[below left] {$31$};
\coordinate (p3_2) at (300:2.5);
\filldraw (p3_2) circle (2pt) node[below right] {$32$};

\draw (p1_2) -- (p1_3) -- (p2_3) -- (p2_1) -- (p3_1) -- (p3_2) -- (p1_2);

\begin{scope}[shift={(7.5,0)}]\scriptsize

\coordinate (p1_2) at (0:2.5);
\filldraw (p1_2) circle (2pt) node[right] {$12$};
\coordinate (p1_3) at (60:2.5);
\filldraw (p1_3) circle (2pt) node[above right] {$13$};
\coordinate (p2_3) at (120:2.5);
\filldraw (p2_3) circle (2pt) node[above left] {$23$};
\coordinate (p2_1) at (180:2.5);
\filldraw (p2_1) circle (2pt) node[left] {$21$};
\coordinate (p3_1) at (240:2.5);
\filldraw (p3_1) circle (2pt) node[below left] {$31$};
\coordinate (p3_2) at (300:2.5);
\filldraw (p3_2) circle (2pt) node[below right] {$32$};

\draw (p1_2) -- (p1_3) -- (p2_3) -- (p2_1) -- (p3_1) -- (p3_2) -- (p1_2);

\coordinate (p1_4) at (30:1.443);
\filldraw (p1_4) circle (2pt) node[above left] {$14$};
\coordinate (p4_3) at (90:1.443);
\filldraw (p4_3) circle (2pt) node[above] {$43$};
\coordinate (p2_4) at (150:1.443);
\filldraw (p2_4) circle (2pt) node[above right] {$24$};
\coordinate (p4_1) at (210:1.443);
\filldraw (p4_1) circle (2pt) node[below right] {$41$};
\coordinate (p3_4) at (270:1.443);
\filldraw (p3_4) circle (2pt) node[below] {$34$};
\coordinate (p4_2) at (330:1.443);
\filldraw (p4_2) circle (2pt) node[below left] {$42$};

\draw (p4_1) -- (p4_2) -- (p4_3) -- (p4_1);
\draw[dashed] (p1_4) -- (p2_4) -- (p3_4) -- (p1_4);

\draw (p1_2) -- (p4_2) -- (p3_2);
\draw[dashed] (p2_1) -- (p2_4) -- (p2_3);
\draw (p1_3) -- (p4_3) -- (p2_3);
\draw[dashed] (p3_1) -- (p3_4) -- (p3_2);
\draw (p2_1) -- (p4_1) -- (p3_1);
\draw[dashed] (p1_3) -- (p1_4) -- (p1_2);

\end{scope}

\end{tikzpicture}
\footnotesize\\
\medskip
Figure \arabic{figure_num}: the polytopes $P_3$ (left) and $P_4$ (right). Each point labeled $ij$ corresponds to $e_i-e_j$.
\end{center}

\subsection{Radon polytopes corresponding to point collections}
We now define (general) Radon polytopes, which encode all relevant information about Radon partitions for some collection of points.
\begin{df}
Let $N,d\in\bbN$, and let $V\su\An$ be a linear subspace of codimension $d$ (i.e. dimension $N-d-1$). Then $P_N\cap V$ is called a \emph{Radon polytope} (for $N$ points in $\Rd$).

Let $x_1,\ldots,x_N\in\Rd$ be some collection of points, not contained in any proper affine subspace. We define the corresponding \emph{Radon polytope} as
$$P(x_1,\ldots,x_n)=\set{(\alpha_1,\ldots,\alpha_N)\in P_n\ :\ \sum_{i=1}^N \alpha_i x_i=0}.$$
\end{df}

Indeed, for a point collection $x_1,\ldots,x_N\in\Rd$, let $f:\RN\to\Rd$ denote the linear map taking each $e_i$ to $x_i$. Then $\ker(f)\su\RN$ is a subspace of dimension $N-d$ (unless the points are linearly dependent), and $V:=\ker(f)\cap\An$ is a subspace of dimension $N-d-1$ (unless the points lie on some affine subspace). Thus the $(N-d-1)$-dimensional polytope $P(x_1,\ldots,x_N)=P_N\cap V$, is a Radon polytope.

Any $\alpha\in V$ indicates a Radon partition of the collection, as in the proof of Radon's theorem above. Furthermore, $(A,B)$ is a Radon partition for $x_1,\ldots,x_N$ if and only if $F(A,B)\cap V\neq\emp$ (or equivalently, $C(A,B)\cap V\neq\set{0}$). Since the proper faces of $P_N\cap V$ are precisely those $F(A,B)\cap V$ which are non empty, there is a one to one correspondence between proper faces of the Radon polytope and the (ordered) Radon partitions of the points $x_1,\ldots,x_N$. \\
We summarize this as a Lemma:

\begin{lem} \label{radon_cone}
Let $x_1,\ldots,x_N\in\Rd$ be a collection of points, and let $A,B\sub[n]$ be disjoint subsets. 

Denote $V=\ker\Par{(a_1,\ldots,a_N)\mapsto\sum_i a_i x_i}\cap\An$. Then the following are equivalent:
\begin{enumerate}
\item $(A,B)$ is a Radon partition for $x_1,\ldots,x_N$,
\item $F(A,B)\cap V\neq\emp$,
\item $C(A,B)\cap V\neq\set{0}$.
\end{enumerate}
Furthermore, proper faces of the Radon polytope correspond to Radon partitions of $x_1,\ldots,x_N$.
\end{lem}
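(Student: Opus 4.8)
The plan is to establish the three equivalences almost entirely by unwinding definitions --- treating $(1)\Leftrightarrow(2)$ as a direct translation of witnesses and $(2)\Leftrightarrow(3)$ as a rescaling --- and then to deduce the facial correspondence from the already-known face structure of $P_N$ together with a standard fact about intersecting a polytope with a subspace. The observation underlying everything is that, since $\alpha\in\An$ already forces $\sum_i\alpha_i=0$, the condition $\alpha\in V$ reduces to the single extra equation $\sum_i\alpha_i x_i=0$; that is, $V$ is exactly the space of affine dependencies of $x_1,\dots,x_N$. Proving $(1)\Leftrightarrow(2)$ and $(2)\Leftrightarrow(3)$ then makes all three statements equivalent.

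For $(1)\Rightarrow(2)$, a Radon partition supplies a common point $\sum_{i\in A}\lambda_i x_i=\sum_{j\in B}\mu_j x_j$ with $\lambda,\mu\ge 0$ and $\sum_{i\in A}\lambda_i=\sum_{j\in B}\mu_j=1$. I would set $\alpha_i=\lambda_i$ on $A$, $\alpha_j=-\mu_j$ on $B$, and $\alpha_k=0$ elsewhere, and then check directly that $\alpha\in F(A,B)$ (using the coordinate description of $F(A,B)$ given earlier) and that $\alpha\in V$ (using $\sum_i\alpha_i x_i=0$). For $(2)\Rightarrow(1)$ I would run the same dictionary backwards, reading off the two convex combinations from the positive and negative coordinates of a point of $F(A,B)\cap V$. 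Since this is really a single bijection between witnesses, both directions are immediate.

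For $(2)\Leftrightarrow(3)$: because $F(A,B)=\conv\set{e_i-e_j}\su\cone\set{e_i-e_j}=C(A,B)$ and every point of $F(A,B)$ satisfies $\sum_{i\in A}\alpha_i=1\neq 0$, any point of $F(A,B)\cap V$ is a nonzero point of $C(A,B)\cap V$, giving $(2)\Rightarrow(3)$. Conversely, given $0\neq\alpha\in C(A,B)\cap V$, the cone conditions together with $\sum_i\alpha_i=0$ force $s:=\sum_{i\in A}\alpha_i=\sum_{j\in B}(-\alpha_j)\ge 0$; were $s=0$ all coordinates would vanish, so $s>0$ and $\alpha/s\in F(A,B)\cap V$ since $V$ is linear. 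This is exactly the cross-section picture behind Figure \ref{radon_polytopes}.

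Finally, for the facial statement I would invoke the standard fact that for a polytope $Q$ and a subspace $V$ meeting it, $G\mapsto G\cap V$ is a bijection from the faces $G$ of $Q$ whose relative interior meets $V$ onto the faces of $Q\cap V$. Applied to $Q=P_N$, whose proper faces are precisely the $F(A,B)$, this identifies the proper faces of the Radon polytope $P_N\cap V$ with the partitions $(A,B)$ for which $V$ meets $\operatorname{relint}F(A,B)$; reading off the sign pattern of such an interior dependency recovers $(A,B)$ uniquely, and by $(1)\Leftrightarrow(2)$ each such pair is a Radon partition. I expect this last step to be the main obstacle: the equivalences are bookkeeping, but making the correspondence genuinely one-to-one requires the relative-interior refinement, since the bare condition $F(A,B)\cap V\neq\emp$ can be shared by several partition faces when $V$ only grazes the boundary of $F(A,B)$. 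I would also record that $0\in\operatorname{relint}(P_N\cap V)$ while $0\notin F(A,B)$, which guarantees these intersections are \emph{proper} faces.
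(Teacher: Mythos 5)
Your proof is correct and follows essentially the same route as the paper, which offers no separate formal proof but treats the lemma as a summary of the preceding discussion: the witness translation from the proof of Radon's theorem (giving $(1)\Leftrightarrow(2)\Leftrightarrow(3)$ by rescaling affine dependencies) together with the fact that the proper faces of $P_N$ are the partition faces $F(A,B)$. If anything, your relative-interior refinement in the facial correspondence is more careful than the paper's one-line assertion that ``the proper faces of $P_N\cap V$ are precisely those $F(A,B)\cap V$ which are non empty,'' which silently glosses over the degenerate case where $V$ meets $F(A,B)$ only in its boundary and the map $(A,B)\mapsto F(A,B)\cap V$ fails to be injective.
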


\subsubsection{Examples}
We consider a few simple examples of Radon polytopes:

\begin{enumerate}
\setcounter{enumi}{-1}
\item For any $N\in\bbN$, the complete Radon polytope $P_N$ (see Figure \ref{radon_polytopes}) is itself a Radon polytope for $N$ identical points, or $x_1=x_2=\cdots=x_N=0\in\bbR^0$ (take $\Rd$ where $d=0$).
\item If $N=d+2$ and $x_1,\ldots,x_{d+2}$ are $d+2$ points in general position in $\Rd$, then $V$ is one dimensional. $V$ intersects the boundary of $P_N$ at two points, $\alpha$ and $-\alpha$, which tell us the exact coefficients needed for a Radon combination (note that if $\alpha\in F(A,B)$ then $-\alpha\in -F(A,B)=F(B,A)$, so the two ordered Radon partitions are $(A,B)$ and $(B,A)$, usually regarded as a single Radon partition).
\item Four points on a line, for example $x_i=i\in\bbR$ for $i=1,2,3,4$. Since $N=4$ and $d=1$, the corresponding Radon polytope $P(1,2,3,4)$ has dimension $N-d-1=2$ (i.e. a polygon), shown in Figure \ref{line_4_points} below.
\begin{center}
\refstepcounter{figure_num}\label{line_4_points}

\begin{tikzpicture}

\coordinate (p13_2) at (2.449,-0.0);
\filldraw (p13_2) circle (2pt) node[right] {$13,2$};
\coordinate (p2_13) at (-2.449,0.0);
\filldraw (p2_13) circle (2pt) node[left] {$2,13$};
\coordinate (p14_2) at (2.177,1.217);
\filldraw (p14_2) circle (2pt) node[above right] {$14,2$};
\coordinate (p2_14) at (-2.177,-1.217);
\filldraw (p2_14) circle (2pt) node[below left] {$2,14$};
\coordinate (p14_3) at (-0.544,2.434);
\filldraw (p14_3) circle (2pt) node[above] {$14,3$};
\coordinate (p3_14) at (0.544,-2.434);
\filldraw (p3_14) circle (2pt) node[below] {$3,14$};
\coordinate (p24_3) at (-1.633,1.826);
\filldraw (p24_3) circle (2pt) node[above left] {$24,3$};
\coordinate (p3_24) at (1.633,-1.826);
\filldraw (p3_24) circle (2pt) node[below right] {$3,24$};

\draw (p13_2) -- (p14_2) -- (p14_3) -- (p24_3) -- (p2_13) -- (p2_14) -- (p3_14) -- (p3_24) -- (p13_2);

\begin{scope}[shift={(7.5,0)}]\scriptsize
\coordinate (p1_2) at (0:2.5);
\coordinate (p1_3) at (60:2.5);
\coordinate (p2_3) at (120:2.5);
\coordinate (p2_1) at (180:2.5);
\coordinate (p3_1) at (240:2.5);
\coordinate (p3_2) at (300:2.5);

\draw[gray] (p1_2) -- (p1_3) -- (p2_3) -- (p2_1) -- (p3_1) -- (p3_2) -- (p1_2);

\coordinate (p1_4) at (30:1.443);
\coordinate (p4_3) at (90:1.443);
\coordinate (p2_4) at (150:1.443);
\coordinate (p4_1) at (210:1.443);
\coordinate (p3_4) at (270:1.443);
\coordinate (p4_2) at (330:1.443);

\draw[gray] (p4_1) -- (p4_2) -- (p4_3) -- (p4_1);
\draw[gray, dashed] (p1_4) -- (p2_4) -- (p3_4) -- (p1_4);

\draw[gray] (p1_2) -- (p4_2) -- (p3_2);
\draw[gray, dashed] (p2_1) -- (p2_4) -- (p2_3);
\draw[gray] (p1_3) -- (p4_3) -- (p2_3);
\draw[gray, dashed] (p3_1) -- (p3_4) -- (p3_2);
\draw[gray] (p2_1) -- (p4_1) -- (p3_1);
\draw[gray, dashed] (p1_3) -- (p1_4) -- (p1_2);

\coordinate (p13_2) at (1.875,-1.083);
\filldraw (p13_2) circle (2pt) node[below right] {$\mathbf{13,2}$};
\coordinate (p2_13) at (-1.875,1.083);
\filldraw (p2_13) circle (2pt) node[above left] {$\mathbf{2,13}$};
\coordinate (p14_2) at (2.083,-0.241);
\filldraw (p14_2) circle (2pt) node[right] {$\mathbf{14,2}$};
\coordinate (p2_14) at (-2.083,0.241);
\filldraw (p2_14) circle (2pt) node[left] {$\mathbf{2,14}$};
\coordinate (p14_3) at (0.417,1.684);
\filldraw (p14_3) circle (2pt) node[above right] {$\mathbf{14,3}$};
\coordinate (p3_14) at (-0.417,-1.684);
\filldraw (p3_14) circle (2pt) node[below left] {$\mathbf{3,14}$};
\coordinate (p24_3) at (-0.625,1.804);
\filldraw (p24_3) circle (2pt) node[above left] {$\mathbf{24,3}$};
\coordinate (p3_24) at (0.625,-1.804);
\filldraw (p3_24) circle (2pt) node[below right] {$\mathbf{3,24}$};

\draw[very thick] (p13_2) -- (p14_2) -- (p14_3) -- (p24_3) -- (p2_13);
\draw[very thick, dashed] (p2_13) -- (p2_14) -- (p3_14) -- (p3_24) -- (p13_2);
\end{scope}

\end{tikzpicture}
\footnotesize\\
\medskip
Figure \arabic{figure_num}: the polytope $P(1,2,3,4)$ (left), appearing as a subset of $P_4$ (right).
\end{center}

\item Five points on a line: $x_i=i$ for $i=1,2,3,4,5$. The corresponding $3$-dimensional Radon polytope is shown in Figure \ref{line_5_points}.

\begin{center}
\refstepcounter{figure_num}\label{line_5_points}
\begin{tikzpicture}[scale=0.8]

\coordinate (p13_2) at (-0.0,-1.378);
\filldraw (p13_2) circle (2pt) node[below right] {$13,2$};
\coordinate (p2_13) at (0.0,1.378);
\filldraw (p2_13) circle (2pt) node[above left] {$2,13$};
\coordinate (p14_2) at (3.043,-1.224);
\filldraw (p14_2) circle (2pt) node[below right] {$14,2$};
\coordinate (p2_14) at (-3.043,1.224);
\filldraw (p2_14) circle (2pt) node[above left] {$2,14$};
\coordinate (p15_2) at (2.282,0.663);
\filldraw (p15_2) circle (2pt) node[left] {$15,2$};
\coordinate (p2_15) at (-2.282,-0.663);
\filldraw (p2_15) circle (2pt) node[right] {$2,15$};
\coordinate (p14_3) at (6.086,0.306);
\filldraw (p14_3) circle (2pt) node[right] {$14,3$};
\coordinate (p3_14) at (-6.086,-0.306);
\filldraw (p3_14) circle (2pt) node[left] {$3,14$};
\coordinate (p15_3) at (4.564,4.081);
\filldraw (p15_3) circle (2pt) node[above right] {$15,3$};
\coordinate (p3_15) at (-4.564,-4.081);
\filldraw (p3_15) circle (2pt) node[below left] {$3,15$};
\coordinate (p15_4) at (-2.282,5.663);
\filldraw (p15_4) circle (2pt) node[above right] {$15,4$};
\coordinate (p4_15) at (2.282,-5.663);
\filldraw (p4_15) circle (2pt) node[below left] {$4,15$};
\coordinate (p24_3) at (4.564,0.918);
\filldraw (p24_3) circle (2pt) node[below left] {$24,3$};
\coordinate (p3_24) at (-4.564,-0.918);
\filldraw (p3_24) circle (2pt) node[above right] {$3,24$};
\coordinate (p25_3) at (3.043,3.639);
\filldraw (p25_3) circle (2pt) node[above] {$25,3$};
\coordinate (p3_25) at (-3.043,-3.639);
\filldraw (p3_25) circle (2pt) node[below] {$3,25$};
\coordinate (p25_4) at (-3.043,5.442);
\filldraw (p25_4) circle (2pt) node[above left] {$25,4$};
\coordinate (p4_25) at (3.043,-5.442);
\filldraw (p4_25) circle (2pt) node[below right] {$4,25$};
\coordinate (p35_4) at (-4.564,3.622);
\filldraw (p35_4) circle (2pt) node[above left] {$35,4$};
\coordinate (p4_35) at (4.564,-3.622);
\filldraw (p4_35) circle (2pt) node[below right] {$4,35$};

\draw[thick] (p13_2) -- (p14_2);
\draw[thick] (p13_2) -- (p15_2);
\draw[thick] (p13_2) -- (p3_24);
\draw[thick] (p13_2) -- (p3_25);
\draw[dashed] (p2_13) -- (p2_14);
\draw[dashed] (p2_13) -- (p2_15);
\draw[dashed] (p2_13) -- (p24_3);
\draw[dashed] (p2_13) -- (p25_3);
\draw[thick] (p14_2) -- (p15_2);
\draw[thick] (p14_2) -- (p14_3);
\draw[thick] (p14_2) -- (p4_25);
\draw[dashed] (p2_14) -- (p2_15);
\draw[dashed] (p2_14) -- (p3_14);
\draw[dashed] (p2_14) -- (p25_4);
\draw[thick] (p15_2) -- (p15_3);
\draw[thick] (p15_2) -- (p15_4);
\draw[dashed] (p2_15) -- (p3_15);
\draw[dashed] (p2_15) -- (p4_15);
\draw[thick] (p14_3) -- (p15_3);
\draw[dashed] (p14_3) -- (p24_3);
\draw[thick] (p14_3) -- (p4_35);
\draw[thick] (p3_14) -- (p3_15);
\draw[thick] (p3_14) -- (p3_24);
\draw[thick] (p3_14) -- (p35_4);
\draw[thick] (p15_3) -- (p15_4);
\draw[dashed] (p15_3) -- (p25_3);
\draw[thick] (p3_15) -- (p4_15);
\draw[thick] (p3_15) -- (p3_25);
\draw[thick] (p15_4) -- (p25_4);
\draw[thick] (p15_4) -- (p35_4);
\draw[thick] (p4_15) -- (p4_25);
\draw[dashed] (p4_15) -- (p4_35);
\draw[dashed] (p24_3) -- (p25_3);
\draw[dashed] (p24_3) -- (p4_35);
\draw[thick] (p3_24) -- (p3_25);
\draw[thick] (p3_24) -- (p35_4);
\draw[dashed] (p25_3) -- (p25_4);
\draw[thick] (p3_25) -- (p4_25);
\draw[thick] (p25_4) -- (p35_4);
\draw[thick] (p4_25) -- (p4_35);

\end{tikzpicture}
\footnotesize\\
\medskip
Figure \arabic{figure_num}: the polytope $P(1,2,3,4,5)$.
\end{center}

\end{enumerate}
The vertices of a Radon polytope correspond to \emph{minimal} Radon partitions in a collection of points for which it corresponds (also called primitive partitions, see \cite{HK}). In Figure \ref{line_4_points} (and other figures), $A,B$ denotes the point in $F(A,B)\sub P_N$ with the exact coordinates being those that appear in an affine dependency among points from the collection. We also use shorthand notation of $ij$ in place of $\set{i,j}$. So the vertex $14,3$ is located at $(\frac13, 0, -1, \frac23, 0)$, because $\frac13x_1 -x_3+\frac23x_4=0$, similarly $24,3$ is $(0, \frac12, -1, \frac12, 0)$, etc.

\section{Random polytopes}
So far, we have proved that every $N$ points $x_1,\ldots,x_N\in\Rd$ have a corresponding Radon polytope. We will soon see that the converse is also true: given a subspace $V\su\An$ of codimension $d$, there exists $N$ points in $\Rd$ for which $P_N\cap V$ is the corresponding Radon polytope, and describe how the subspace $V$ can be chosen randomly.

Suppose we have a question stated in terms of the Radon partitions of a collection or collections of $N$ points $X\sub\Rd$ (perhaps asking for the existence of $X$ with certain properties, or whether every such $X$ has certain properties). In this case, it is sufficient to understand the Radon polytope $P(X)=P_N\cap V$, where $V\su\An$ is a linear subspace of codimension $d$. It is often useful to ignore the specific collection $X$ altogether, and focus on the relevant subspace $V$ and Radon polytope $P_N\cap V$. If we choose $V$ randomly, we obtain a probabilistic version of our question.

One way to choose $V$ \emph{uniformly} at random, is by taking $x_1,\ldots,x_N\in\Rd$ to be the image of the standard basis of $\RN$, under a random orthogonal projection to $\Rd$. In more detail, choose $v_1,\ldots,v_{N-1}\in\An$ randomly (using any distribution which is invariant under orthogonal transformations). Let $w_0,w_1,\ldots,w_N\in\RN$ be the vectors obtained by applying the Gram-Schmidt process on $\one,v_1,\ldots,v_{N-1}$, where $\one=(1,\ldots,1)\in\RN$ is the vector with $1$'s in each coordinate. For each $i=1,\ldots,N$, let $e_i=\sum_j a^i_j w_j\in\RN$ be the $i$-th standard basis vector. Define $f:\RN\to\Rd$ as the linear map taking $e_i$ to $x_i:=\Par{a^i_{N-d},\ldots,a^i_{N-1}}$. Clearly, $\ker f=\spa\set{w_0,w_1,\ldots,w_{N-d-1}}$, therefore $\ker(f)\cap\An=\spa\set{w_1,\ldots,w_{N-d-1}}=\spa\set{v_1,\ldots,v_{N-d-1}}$. For such randomly chosen $x_1,\ldots,x_N\in\Rd$, we obtain the corresponding Radon configuration $P(x_1,\ldots,x_N)=P_N\cap V$, where $V$ is chosen uniformly at random.\\
Also note that for any fixed subspace $V\su\An$ of codimension $d$, we have $x_1,\ldots,x_N\in\Rd$ with $P(x_1,\ldots,X_N)=P_N\cap V$ (simply take $w_1,\ldots,w_{N-d-1}$ as a basis for $V$).

Perhaps surprisingly, we could also choose $x_1,\ldots,x_N$ independently from a Gaussian distribution. While the collections of points obtained in these two methods have quite different distributions (e.g. in the second method the points are independent), the corresponding random Radon configuration is uniformly distributed in both cases \cite{BV}.

Henceforth, whenever $x_1,\ldots,x_N\in\Rd$ and/or a subspace $V\su\An$ of codimension $d$ are chosen randomly, we assume that $V$ is chosen uniformly at random, and $x_1,\ldots,x_N$ are chosen independently from a standard Gaussian distribution. We also say that the Radon polytope $P_N\cap V$ is chosen uniformly at random.

\subsection{Probability of obtaining specific Radon partitions}
Consider a partition $[N]=A\cup B$, and denote $|A|=m,\ |B|=n$, so $N=m+n$. Let $\bbP_d(A,B)$ denote the probability that $(A,B)$ is a Radon partition for randomly chosen points $x_1,\ldots,x_N\in\Rd$ (equivalently, the probability that $F(A,B)$ intersects a randomly chosen Radon polytope for $N$ points in $\Rd$).

For a cone $C$ in some ambient space $\bbR^D$, let $v_0(C), v_1(C),\ldots,v_D(C)$ denote its conic intrinsic volumes (see \cite{AL} for definitions and any unproved statements regarding intrinsic volumes). Isometric cones have the same intrinsic volumes and in particular don't depend on the ambient space, so it is useful to follow the convention $v_k(C)\equiv 0$ for every $k>\dim(C)$. If $C$ is not a vector space, the probability that a random linear subspace $V\su\RN$ of codimension $d$ intersects $C$ nontrivially, is given by the conic Kinematic formulas (see \cite[\S 2.3.3]{AL}):
\begin{equation} \label{kinematic_formula}
\bbP(C\cap V\neq\set 0)=2\sum_{i\geq1\text{ odd}} v_{d+i}(C).
\end{equation}
Let $v_k(m,n)$ denote $v_k\Par{C(A,B)}$: this is well defined because any two partition cones $C(A,B)$ and $C(A',B')$ are isometric if $|A|=|A'|$ and $|B|=|B'|$. From Lemma \ref{radon_cone}, we obtain
\begin{equation} \label{radon_intrinsic}
\bbP_d(A,B)=2\sum_{i\geq1\text{ odd}} v_{d+i}(m,n),
\end{equation}
noting that $v_k(m,n)=0$ for $k\geq m+n$ (because $C(A,B)$ has dimension $|A|+|B|-1$). We also mention Gauss--Bonnet \cite[Corollary 4.4]{AL}, from which it follows that for $m,n\in\bbN$,
\begin{equation} \label{gauss-bonnet}
\sum_{i\text{ odd}} v_i(m,n)=\sum_{i\text{ even}} v_i(m,n)=\frac12.
\end{equation}

We will now explore $v_k(m,n)$ and $\bbP_d(A,B)$, for some values of $d,k,m,n$.

\subsubsection{The case $d=1$ and $k=0$}
\begin{prop}
Let $[m+n]=A\cup B$ be a partition with $|A|=m$ and $|B|=n$. Then
$$\bbP_1(A,B)=1-\frac{2}{\binom{m+n}{m}}.$$
\end{prop}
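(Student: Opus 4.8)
The plan is to reduce the statement to the single quantity $v_0(m,n)$ and then evaluate that combinatorially. Setting $d=1$ in \eqref{radon_intrinsic}, the odd indices $i=1,3,5,\dots$ produce exactly the even-indexed volumes $v_2,v_4,v_6,\dots$, so $\bbP_1(A,B)=2\sum_{k\geq 2,\ k\ \mathrm{even}}v_k(m,n)$. Feeding in the even case of Gauss--Bonnet \eqref{gauss-bonnet}, namely $v_0(m,n)+v_2(m,n)+v_4(m,n)+\cdots=\tfrac12$, collapses this telescopically to $\bbP_1(A,B)=1-2v_0(m,n)$. Hence it suffices to prove that $v_0(m,n)=1/\binom{m+n}{m}$, which is precisely the content flagged by the section heading ($d=1$, $k=0$).

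To evaluate $v_0$, I would use the probabilistic description of conic intrinsic volumes from \cite{AL}: for a cone $C$, the value $v_0(C)$ is the probability that the metric projection of a standard Gaussian vector onto $C$ is the origin, equivalently the (Gaussian) solid angle of the polar cone $C^\circ$. Since $A\cup B=[m+n]$ with $m,n\geq 1$, the cone $C(A,B)$ is pointed (if $\alpha$ and $-\alpha$ both lie in $C(A,B)$ then $\alpha$ vanishes on $A$ and on $B$, hence $\alpha=0$) and of full dimension $m+n-1$, so $\spa C(A,B)=\mathbb{A}^{m+n}$ and the relevant Gaussian is the standard one on $\mathbb{A}^{m+n}$; in particular $C(A,B)$ is not a subspace, so \eqref{kinematic_formula} legitimately applies.

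Next I would identify the polar cone explicitly. Since $\langle y,e_i-e_j\rangle=y_i-y_j$, membership $y\in C(A,B)^\circ$ amounts to $y_i\leq y_j$ for every $i\in A$ and $j\in B$, i.e. $\max_{i\in A}y_i\leq\min_{j\in B}y_j$. Realizing a standard Gaussian on $\mathbb{A}^{m+n}$ as the orthogonal projection $y=g-\bar g\,\one$ of i.i.d.\ standard Gaussians $g_1,\dots,g_{m+n}$ (with $\bar g$ their mean), the differences satisfy $y_i-y_j=g_i-g_j$, so the defining event becomes $\max_{i\in A}g_i\leq\min_{j\in B}g_j$ for i.i.d.\ continuous variables. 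By exchangeability every ordering of the $g_i$ is equally likely, and this event holds precisely when the $m$ indices of $A$ occupy the smallest positions; there are $m!\,n!$ such orderings out of $(m+n)!$, giving $v_0(m,n)=m!\,n!/(m+n)!=1/\binom{m+n}{m}$. Substituting into $\bbP_1(A,B)=1-2v_0(m,n)$ then finishes the proof.

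The only real subtlety I anticipate is this first reduction step: matching the definition of $v_0$ to a polar-cone/projection probability, and checking that the shift by $\one$ (the direction orthogonal to $\mathbb{A}^{m+n}$) leaves the defining inequalities invariant, which is what justifies passing from the intrinsic Gaussian on $\mathbb{A}^{m+n}$ to i.i.d.\ coordinates in $\mathbb{R}^{m+n}$. Once that correspondence is in place the combinatorial count is immediate, so I expect essentially all the care to lie in the conic-geometry bookkeeping rather than in any computation.
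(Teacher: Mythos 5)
Your proof is correct, and it runs in the opposite logical direction from the paper's. The paper proves the proposition directly and elementarily: with $d=1$ the convex hulls of $\set{x_i: i\in A}$ and $\set{x_j:j\in B}$ are intervals, so $(A,B)$ fails to be a Radon partition exactly when all points of $A$ lie below all points of $B$ or vice versa; by exchangeability of the i.i.d.\ Gaussians this has probability $2\binom{m+n}{m}^{-1}$, and no conic machinery is used at all. The identity $v_0(m,n)=\binom{m+n}{m}^{-1}$ is then extracted \emph{afterwards} as Corollary \ref{v_0}, by exactly the algebra with (\ref{radon_intrinsic}) and (\ref{gauss-bonnet}) that you run in reverse. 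You instead prove the corollary first: you identify $v_0\Par{C(A,B)}$ as the Gaussian measure of the polar cone, check pointedness and full-dimensionality so that this identification (and the applicability of (\ref{kinematic_formula})) is legitimate, compute the polar cone as $\set{y : \max_{i\in A}y_i\leq\min_{j\in B}y_j}$, and reduce to the same exchangeability count via the projection $y=g-\bar g\,\one$; then (\ref{radon_intrinsic}) and (\ref{gauss-bonnet}) give $\bbP_1(A,B)=1-2v_0(m,n)$. Both arguments have the same combinatorial core (orderings in which $A$ precedes $B$), but yours is a genuinely different decomposition: it buys an independent, purely conic proof of Corollary \ref{v_0} that does not pass through the Radon-partition interpretation, at the cost of invoking the Amelunxen--Lotz projection characterization of $v_0$ and the polar-cone bookkeeping, where the paper's route needs only the one-dimensional geometry of intervals. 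Your anticipated subtlety is handled correctly: differences $y_i-y_j$ are invariant under the shift along $\one$, which is precisely why the intrinsic Gaussian on $\mathbb{A}^{m+n}$ can be replaced by i.i.d.\ coordinates, and ties occur with probability zero.
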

\begin{proof}
Let $x_1,\ldots,x_N\in\bbR$ be independent normal random variables, where $N=m+n$. The random configuration $P_N\cap\ker(f)$ is uniformly distributed, where $f:\RN\to\Rd$ takes each $e_i$ to $x_i$. With probability $1$, we have $x_{\sigma(1)}<x_{\sigma(2)}<\cdots<x_{\sigma(N)}$ for some permutation $\sigma:[N]\to[N]$. Furthermore, $\sigma$ is uniformly distributed, since $x_1,\ldots,x_N$ are chosen independently. Clearly $(A,B)$ is \emph{not} a Radon partition for $x_1,\ldots,x_N$ if and only if $x_i<x_j$ for every $i\in A, j\in B$ \emph{or} if $x_i>x_j$ for every $i\in A, j\in B$. Thus the probability that $(A,B)$ is not a Radon partition is $2\cdot{\binom{N}{m}}^{-1}$, and the claim follows.
\end{proof}
\begin{cor} \label{v_0}
From (\ref{radon_intrinsic}) and (\ref{gauss-bonnet}), we deduce that
$$v_0(m,n)=\binom{m+n}{m}^{-1}.$$
\end{cor}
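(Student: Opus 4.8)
The plan is purely to combine the preceding Proposition with the two identities (\ref{radon_intrinsic}) and (\ref{gauss-bonnet}); no new geometry is needed. First I would specialize (\ref{radon_intrinsic}) to the case $d=1$. As $i$ ranges over the odd positive integers, the index $d+i=1+i$ ranges over the even integers $\geq 2$, so that
$$\bbP_1(A,B)=2\sum_{\substack{k\geq 2\\ k\text{ even}}} v_k(m,n).$$

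Next I would use Gauss--Bonnet to rewrite this tail in terms of $v_0$ alone. The even part of (\ref{gauss-bonnet}) reads $v_0(m,n)+v_2(m,n)+v_4(m,n)+\cdots=\tfrac12$, so subtracting the $k=0$ term gives $\sum_{k\geq 2\text{ even}} v_k(m,n)=\tfrac12-v_0(m,n)$. Substituting this in yields $\bbP_1(A,B)=2\bigl(\tfrac12-v_0(m,n)\bigr)=1-2v_0(m,n)$.

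Finally I would equate this expression with the value $\bbP_1(A,B)=1-2\binom{m+n}{m}^{-1}$ established in the Proposition and solve for $v_0(m,n)$, which immediately gives $v_0(m,n)=\binom{m+n}{m}^{-1}$. The only point demanding any care is the index bookkeeping in the first step: verifying that exactly the even intrinsic volumes of index at least $2$ occur, so that $v_0$ is the unique even-index term \emph{missing} from the formula for $\bbP_1$ and can thus be isolated via Gauss--Bonnet. Beyond that there is no genuine obstacle; the substantive content of the Corollary lies entirely in the Proposition, and this argument is a short algebraic extraction.
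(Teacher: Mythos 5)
Your argument is correct and is exactly the deduction the paper intends: specialize (\ref{radon_intrinsic}) to $d=1$ so that only even-index intrinsic volumes with index $\geq 2$ appear, eliminate that tail via the even part of (\ref{gauss-bonnet}), and compare with the value of $\bbP_1(A,B)$ from the preceding Proposition to isolate $v_0(m,n)$. The paper leaves this routine extraction implicit, and your index bookkeeping fills it in correctly.
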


\subsubsection{The case $m=1$}
Let $A=\set{n+1}$ and $B=[n]$. Note that $C=C(A,B)=\cone\set{e_{n+1}-e_i\ :\ i\in[n]}$, and the points $\set{e_{n+1}-e_i\ :\ i\in[n]}$ are the vertices of a regular simplex.  The intrinsic volumes of such cones was explored in \cite{KZ} by Kabluchko and Zaporozhets, who denoted $C_n(r)=\cone\set{u_1,\ldots,u_n}$, where $-1/n\leq r\in\bbR$ and $u_1,\ldots,u_n$ are linearly independent vectors (in some $\bbR^D$), satisfying
$$\left<u_i,u_j\right>=\begin{cases}
1+r &\text{ if } i=j,\\
r&\text{ if } i\neq j.\end{cases}$$
Then:

\begin{prop}[{\cite[Propositions 1.3--1.4]{KZ}}] \label{g_values}
Define $g_0:\bbR\to\bbR$ by $g_0(r)=1$. For every $n\in\bbN$, there exists a (differentiable) function $g_n:[-\frac 1n,\infty)\to[0,1]$, such that:
\begin{enumerate}\setcounter{enumi}{-1}
\item
$$v_k\Par{C_n(r)}=\binom{n}{k}g_k\Par{-\frac{r}{1+kr}}g_{n-k}\Par{\frac{r}{1+kr}}$$
for every $r>\frac1n$,
\item $g_1(r)=\frac 12$ ,
\item $g_2(r)=\frac14+\frac1{2\pi}\arcsin\frac{r}{1+r}$ ,
\item $g_3(r)=\frac18+\frac3{4\pi}\arcsin\frac{r}{1+r}$ ,
\item For $2\leq n\in\bbN$ we have
$$g_n\Par{-\frac1n}=0\ ,\quad g_n(0)=2^{-n}\ ,\quad g_n(1)=\frac1{n+1}\ ,$$
\item For $2\leq n\in\bbN$ and $r>-\frac1n$ we have
$$g'_n(r)=\frac{n(n-1)}{4\pi(r+1)\sqrt{(2r+1)}}g_{n-2}\Par{\frac r{2r+1}}\ .$$
\end{enumerate}
\end{prop}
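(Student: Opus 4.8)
The plan is to realize every $g_n$ as a Gaussian orthant probability and to deduce the statement from two classical tools: the conic angle-sum decomposition of intrinsic volumes, and Plackett's differentiation formula for the multivariate normal distribution. First I would define, for $r\geq-\tfrac1n$, the function
$$g_n(r)=\bbP\Par{Y_1\geq 0,\ldots,Y_n\geq 0},$$
where $(Y_1,\ldots,Y_n)$ is a centered Gaussian vector with unit variances and all pairwise correlations equal to $\rho=\tfrac{r}{1+r}$. The equicorrelation matrix is positive semidefinite exactly when $\rho\geq-\tfrac1{n-1}$, i.e. $r\geq-\tfrac1n$, so $g_n$ is well defined on the stated domain and automatically takes values in $[0,1]$; smoothness of the Gaussian integral gives differentiability on the interior (the recursion below covers the endpoints). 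A short duality computation then links $g_n$ to solid angles: the normalized solid angle of a simplicial cone whose unit generators are pairwise equicorrelated with correlation $c$ equals $g_m\Par{-\tfrac{c}{1+(m-1)c}}$, since passing to the dual basis turns the solid angle into the orthant probability of a Gaussian with covariance $\Par{(1-c)I+cJ}^{-1}$, whose off-diagonal correlation works out to $-\tfrac{c}{1+(m-2)c}$.

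With this dictionary I would prove the decomposition (item 0) from the conic angle-sum formula $v_k(C)=\sum_{F}\beta(F,C)\,\gamma(F,C)$, the sum over $k$-faces $F$, with $\beta$ the internal angle (solid angle of $F$) and $\gamma$ the external angle (solid angle of the normal cone $N(C,F)$); see \cite{AL}. For $C=C_n(r)$ the $k$-faces are the $\binom nk$ congruent cones $\cone\set{u_i : i\in S}$, $\abs S=k$. Each face is a copy of $C_k(r)$, whose generators have correlation $c=\tfrac{r}{1+r}$, so the solid-angle formula gives $\beta=g_k\Par{-\tfrac{r}{1+kr}}$. The normal cone at this face is spanned by the $n-k$ dual generators $\set{u_i^* : i\notin S}$; reading their Gram matrix off $\Par{I+rJ}^{-1}$ gives generator-correlation $-\tfrac{r}{1+(n-1)r}$, and the same formula yields $\gamma=g_{n-k}\Par{\tfrac{r}{1+kr}}$. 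Multiplying and summing over faces gives item 0, and the case $k=n$ recovers that $g_n\Par{-\tfrac{r}{1+nr}}$ is the solid angle of $C_n(r)$.

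The explicit values (items 1--3) are then exactly the classical equicorrelated orthant probabilities: $g_1\equiv\tfrac12$ trivially, while the bivariate and trivariate quadrant formulas give $\tfrac14+\tfrac1{2\pi}\arcsin\rho$ and $\tfrac18+\tfrac3{4\pi}\arcsin\rho$, which become items 2--3 after substituting $\rho=\tfrac{r}{1+r}$. For the boundary values (item 4): at $r=0$ the $Y_i$ are independent, giving $2^{-n}$; at $r=-\tfrac1n$ the correlation is $-\tfrac1{n-1}$, the Gaussian degenerates onto $\set{\sum Y_i=0}$ and the orthant event has probability $0$; and at $r=1$ the correlation is $\tfrac12$, so writing $Y_i=(Z_0+Z_i)/\sqrt2$ with $Z_0,\ldots,Z_n$ i.i.d. standard reduces the event to $\set{Z_0\leq\min_i Z_i}$, which by exchangeability of $n+1$ i.i.d. variables has probability $\tfrac1{n+1}$.

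The main obstacle is the recursion (item 5), which I would obtain from Plackett's classical differentiation formula: for a standard Gaussian with correlation matrix $R$,
$$\frac{\partial}{\partial\rho_{ij}}\,\bbP(Y_1\geq0,\ldots,Y_n\geq0)=\frac{1}{2\pi\sqrt{1-\rho_{ij}^2}}\,\bbP\Par{Y_l\geq0\ (l\neq i,j)\mid Y_i=Y_j=0}.$$
Since all $\binom{n}{2}$ correlations equal $\rho=\tfrac{r}{1+r}$ with $\tfrac{d\rho}{dr}=(1+r)^{-2}$, the chain rule gives $g_n'(r)=\binom{n}{2}\,(1+r)^{-2}\,(2\pi\sqrt{1-\rho^2})^{-1}\,Q$, where $Q$ is the conditional orthant probability. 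The key computation is that conditioning the equicorrelated vector on $Y_i=Y_j=0$ leaves the remaining $n-2$ coordinates equicorrelated with correlation $\tfrac{\rho}{1+2\rho}$; translating through $\rho=\tfrac{r}{1+r}$ shows this equals $\tfrac{s}{1+s}$ for $s=\tfrac{r}{2r+1}$, so $Q=g_{n-2}\Par{\tfrac{r}{2r+1}}$. Finally $1-\rho^2=\tfrac{2r+1}{(1+r)^2}$ collapses the prefactor to $\tfrac{n(n-1)}{4\pi(r+1)\sqrt{2r+1}}$, giving item 5. I expect the delicate points to be the Plackett differentiation itself (and the care needed near the degenerate endpoint, where $g_n$ is only one-sidedly differentiable) and the bookkeeping of the several Möbius-type reparametrizations $r\mapsto-\tfrac{r}{1+kr}$, $r\mapsto\tfrac{r}{1+kr}$, and $r\mapsto\tfrac{r}{2r+1}$ that thread through the argument.
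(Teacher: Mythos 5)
The paper contains no proof of this proposition: the bracketed citation \cite[Propositions 1.3--1.4]{KZ} \emph{is} the paper's entire justification, so there is no internal argument to compare yours against, only the source it is imported from. Judged on its own terms, your blind reconstruction is essentially correct, and it is in the same spirit as how such results are established in \cite{KZ} (regular simplicial cones, Gaussian orthant probabilities, differentiation in the correlation parameter). The key computations all check: the dual Gram matrix $(I+rJ)^{-1}=I-\tfrac{r}{1+nr}J$ gives generator correlation $-\tfrac{r}{1+(n-1)r}$, so your dictionary between solid angles and dual-basis orthant probabilities is right; the internal angle of a $k$-face of $C_n(r)$ comes out to $g_k\Par{-\tfrac{r}{1+kr}}$ and the external angle of its normal cone to $g_{n-k}\Par{\tfrac{r}{1+kr}}$, so the internal/external-angle expansion of $v_k$ for polyhedral cones (which is available in \cite{AL}) gives item 0; items 1--4 follow from the classical one-, two-, and three-dimensional quadrant formulas, independence at $\rho=0$, degeneracy at $\rho=-\tfrac1{n-1}$, and the exchangeability trick at $\rho=\tfrac12$; and for item 5, Plackett's formula combined with the conditional-correlation computation $\tilde\rho=\rho/(1+2\rho)=s/(1+s)$ for $s=r/(2r+1)$, together with $1-\rho^2=(2r+1)/(1+r)^2$, produces exactly the prefactor $\tfrac{n(n-1)}{4\pi(r+1)\sqrt{2r+1}}$. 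Two caveats. (i) Differentiability at the left endpoint $r=-\tfrac1n$ is not delivered by your argument, and in fact fails for $n=2$: by your own recursion $g_2'(r)=\bigl(2\pi(r+1)\sqrt{2r+1}\bigr)^{-1}\to\infty$ as $r\downarrow-\tfrac12$, so the parenthetical ``differentiable'' in the statement can only be meant on the open half-line; this is a defect of the paraphrased statement rather than of your proof, but you should not promise to ``cover the endpoints.'' (ii) The hypothesis ``$r>\tfrac1n$'' in item 0 should be read as $r>-\tfrac1n$ (a typo in the paper): your angle decomposition proves the identity on that full range, and the paper itself needs that range, e.g.\ when it evaluates $v_n\Par{C_n(\tfrac1m)}=g_n\Par{-\tfrac1{m+n}}$ in Proposition \ref{formula_kmax}, where $\tfrac1m$ may well be smaller than $\tfrac1n$.
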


In particular, in this case $C(A,B)$ is isometric to $C_n(1)$, so Proposition \ref{g_values} (0) becomes
\begin{equation} \label{formula_m1}
v_k(1,n)=\binom{n}{k}g_k\Par{-\frac{1}{k+1}}g_{n-k}\Par{\frac{1}{k+1}}.
\end{equation}

\subsubsection{The case $m=2$}
To obtain a formula for $v_k(m,n)$ when $m>1$ in terms of the functions $g_0,g_1,\ldots$, we will first prove a certain inclusion-exclusion principle for the intrinsic volumes. From (\ref{kinematic_formula}), we learn that for a cone $C\su\RN$ that is not a vector space,
$$v_k(C)=\frac12\Big(\bbP(C\cap V_{k-1}\neq\set0)-\bbP(C\cap V_{k+1}\neq\set0)\Big),$$
where each $V_d$ is a randomly chosen vector subspace of codimension $d$. Note that the above equation holds even when $C$ is a subspace of dimension $\ell\neq k,k+1$: in this case $v_{\ell}(C)=1$, $v_d(C)=0$ for $d\neq\ell$ (see \cite{AL}), and $\bbP(C\cap V_d)$ is $1$ if $\ell>d$ and $0$ otherwise. We deduce:

\begin{lem}[Inclusion-exclusion] \label{ie_lem}
Let $C,D$ be cones in $\RN$, such that $C\cup D$ is a cone. Then
$$v_k(C\cup D)=v_k(C)+v_k(D)-v_k(C\cap D),$$
provided that none of the cones $C\cup D,C,D,C\cap D$ are a linear subspace of dimension $k$ or $k+1$. More generally, if $C_1,\ldots,C_n$ and $\cup_i C_i$ are cones, then
\begin{equation} \label{IE}
v_k\Par{\bigcup_{i=1}^n C_i}=\sum_{j=1}^n (-1)^{j+1}\Par{\sum_{1\leq i_1<\cdots<i_j\leq n} v_k\Par{C_{i_1}\cap\cdots\cap C_{i_j}}},
\end{equation}
provided that none of the inputs to the intrinsic volume function $v_k(\cdot)$ that appear in the equation are a linear subspace of dimension $k$ or $k+1$.
\end{lem}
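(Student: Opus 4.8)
The plan is to reduce everything to the single-codimension intersection probabilities, prove the two-cone case first, and then induct. Write $p_d(C)=\bbP(C\cap V_d\neq\set0)$ for a uniformly random subspace $V_d$ of codimension $d$. The representation recorded just before the statement, $v_k(C)=\frac12\Par{p_{k-1}(C)-p_{k+1}(C)}$, is valid simultaneously for $C$, $D$, $C\cap D$ and $C\cup D$ precisely because of the hypothesis that none of these is a linear subspace of dimension $k$ or $k+1$. Hence, once I establish the probability valuation identity $p_d(C\cup D)=p_d(C)+p_d(D)-p_d(C\cap D)$ for $d=k-1$ and $d=k+1$, the two-cone case follows by subtracting the two instances and multiplying by $\frac12$: the four $v_k$-representations turn the right-hand side into $v_k(C)+v_k(D)-v_k(C\cap D)$.

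To prove the probability identity I would fix $V$ and argue pointwise before integrating. Since any nonzero vector of $(C\cup D)\cap V$ lies in $C$ or in $D$, the event $\set{(C\cup D)\cap V\neq\set0}$ is the union of $\set{C\cap V\neq\set0}$ and $\set{D\cap V\neq\set0}$, so ordinary two-event inclusion–exclusion gives
$$\bbP\Par{(C\cup D)\cap V\neq\set0}=p_d(C)+p_d(D)-\bbP\Par{C\cap V\neq\set0\text{ and }D\cap V\neq\set0}.$$
Thus the identity is equivalent to the claim $\bbP(C\cap V\neq\set0\text{ and }D\cap V\neq\set0)=p_d(C\cap D)$. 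One inclusion is trivial. For the reverse I would use convexity of $C\cup D$: given nonzero $x\in C\cap V$ and $y\in D\cap V$, the segment $[x,y]$ lies in $V$ and in the convex cone $C\cup D$, and a first-exit argument — let $t^\ast=\sup\set{t\in[0,1]:(1-t)x+ty\in C}$ and use closedness of $C$ and $D$ — produces a crossing point $z=(1-t^\ast)x+t^\ast y$ lying in $(C\cap D)\cap V$. If $z\neq 0$ for some admissible pair $(x,y)$, then $V$ meets $C\cap D$ nontrivially and we are done.

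The main obstacle is the degenerate situation in which every crossing point is forced to be the origin. This forces $0\in[x,y]$ for all admissible $x,y$, which can only happen when $V\cap C$ and $V\cap D$ are antipodal rays spanning a single line contained in $C\cap(-D)$. I therefore must show the set of such $V$ is null. Here convexity enters again: any line $\bbR w$ with $w\in C$ and $-w\in D$ is contained in the lineality space of $C\cup D$, so $C\cap(-D)$ is confined to a proper subspace whenever $C\cup D$ is not itself a subspace. A dimension count then shows that, for the codimensions in play, this degenerate family of $V$ is null — with the exceptional configurations, where $C$ and $D$ collapse to complementary half-spaces so that $C\cup D$ or $C\cap D$ becomes a subspace of dimension $k$ or $k+1$, being exactly those removed by the hypothesis. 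This is where I expect to spend the most care: I must verify that any failure of the probability identity at $d=k\pm1$ always coincides with one of the four cones being an excluded subspace, so that the provisos keep every application of the $v_k$-representation legitimate.

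Finally, I would pass from two cones to $n$ by induction on $n$, writing $\bigcup_{i=1}^n C_i=\Par{\bigcup_{i=1}^{n-1}C_i}\cup C_n$ and using distributivity $\Par{\bigcup_{i<n}C_i}\cap C_n=\bigcup_{i<n}(C_i\cap C_n)$, the alternating signs of (\ref{IE}) assembling in the standard way; the intermediate intersections are again cones in the intended application by Lemma \ref{partition_cone_lem}. Alternatively, once the two-set case is known one can invoke the general inclusion–exclusion principle for valuations (Groemer's theorem) to upgrade it directly to (\ref{IE}). Throughout, the provisos on subspaces of dimension $k$ and $k+1$ must be tracked at each step so that every invocation of the kinematic representation remains valid.
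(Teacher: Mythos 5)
Your proposal follows the same route as the paper's proof: both replace $v_k$ by the kinematic quantity $\widetilde v_k(C)=\frac12\Par{\bbP(C\cap V_{k-1}\neq\set0)-\bbP(C\cap V_{k+1}\neq\set0)}$ and prove inclusion--exclusion at the level of these probabilities, using the provisos only to identify $\widetilde v_k$ with $v_k$. For $n=2$ your treatment is in fact more careful than the paper's, which calls the verification ``straightforward'' and claims it needs no hypotheses. That unconditional claim is false: for the complementary half-planes $C=\set{y\geq 0}$, $D=\set{y\leq 0}$ in $\bbR^2$, every line through the origin meets $C$, $D$ and $C\cup D=\bbR^2$ nontrivially but meets $C\cap D$ only at the origin, so $\widetilde v_2(C\cup D)=\frac12$ while $\widetilde v_2(C)+\widetilde v_2(D)-\widetilde v_2(C\cap D)=1$. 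The failure mechanism is exactly the antipodal-ray degeneracy you isolate, and your description of the exceptional configurations ($C,D$ complementary half-spaces of a subspace $C\cup D$, forcing $C\cup D$ or $C\cap D$ to be a subspace of dimension $k$ or $k+1$) is correct; the verification you defer can be completed by showing that a positive-probability failure at codimension $d$ forces the lineality space of $C\cup D$ to have dimension exactly $d+1$ and forces $C$ and $D$ to lie inside it. So the two-cone part of your plan is sound and actually supplies content the paper omits.

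The genuine gap is the passage from $2$ to $n$. Your induction applies the two-cone identity to the pair $\Par{\bigcup_{i<n}C_i,\ C_n}$, but $\bigcup_{i<n}C_i$ is in general not a convex cone and $v_k$ is not even defined on it; the same objection applies to $\bigcup_{i<n}\Par{C_i\cap C_n}$ after distributivity. This is not a technicality in the intended application: already $C(\set{1},\set{2,3,4})\cup C(\set{2},\set{1,3,4})$ is non-convex (the midpoint of $e_1-e_3$ and $e_2-e_3$ lies in $C(\set{1,2},\set{3,4})$ and in neither cone), and one can check that for $\calC_3$ no ordering of its seven cones keeps all partial unions convex. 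Nor can the $n$-fold probability identity be proved subcollection by subcollection, i.e.\ by showing $\bbP\Par{C_i\cap V\neq\set0\text{ for all }i\in S}=\bbP\Par{\Par{\bigcap_{i\in S}C_i}\cap V\neq\set0}$ for each $S$: in the application, cones $C(A,\cdot)$ and $C(A',\cdot)$ with $A\cap A'=\emp$ intersect only in $\set0$, yet a random $V$ of small codimension meets both with positive probability (with probability $1$ when $d=0$, which is needed for $k=1$); equation (\ref{IE}) survives only because these individual errors cancel in the alternating sum, and exhibiting that cancellation requires a genuinely different argument (e.g.\ a nerve/Euler-characteristic computation for the cones $C_i\cap V$, or the valuation property of conic intrinsic volumes plus a Groemer-type extension). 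The Groemer route you mention is the standard fix, but it requires the two-cone identity for \emph{all} pairs with convex union, without the subspace provisos; that proviso-free statement must first be obtained by directly checking the excluded configurations, a step your proposal does not contain. As written, the case $n\geq 3$ of (\ref{IE}) --- the case the paper actually uses --- is not established. (To be fair, the paper's own one-line argument has the same defect; but your induction, as formulated, is a step that fails rather than a step left unverified.)
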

\begin{proof}
It is straightforward to verify that (\ref{IE}) holds if we replace $v_k$ with the function
$$\widetilde v_k(C):=\frac12\Big(\bbP(C\cap V_{k-1}\neq\set0)-\bbP(C\cap V_{k+1}\neq\set0)\Big),$$
which is defined on any cone $C$. If the assumptions of the lemma hold, $\widetilde v_k$ and $v_k$ are in agreement for every input appearing in the equation.
\end{proof}

Fix $n$, and let $D_i=C(\set i,[n+2]-\set{i})$ for $i=1,2$, and $D_{12}=C(\set{1,2},[n+2]-\set{1,2})$. Then $C=D_1\cup D_2\cup D_{12}$ is itself a cone, and $C=L\oplus C_0$, where $L=\spa\set{e_1-e_2}$, and
$$C_0=\cone\set{\frac12(e_1+e_2)-e_i\ :\ i=3,4,\ldots,n+2},$$
which is isometric to $C_n(\frac12)$. Recall that:

\begin{lem} \label{cone_sum}
Let $L\su\bbR^D$ be a linear subspace of dimension $\ell$, and let $C_0\su L^\perp$ be a cone. Then the intrinsic volumes of the cone $L+C_0$ are given by
$$v_k(L+C_0)=\begin{cases}
0& \text{ if } k<\ell,\\
v_{k-\ell}(C_0)& \text{ if } k\geq\ell.
\end{cases}$$
\end{lem}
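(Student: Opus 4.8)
The plan is to realize $L+C_0$ as an \emph{orthogonal product} of cones and then invoke the product (convolution) rule for conic intrinsic volumes. Since $C_0\su L^\perp$, every element of $L+C_0$ has a unique decomposition $u+v$ with $u\in L$ and $v\in C_0$, so in the orthogonal splitting $\bbR^D=L\oplus L^\perp$ the cone $L+C_0$ is exactly the product of the cone $L$ (sitting inside $L$) with the cone $C_0$ (sitting inside $L^\perp$) — it is a genuine orthogonal product, not merely a Minkowski sum. The conic intrinsic volumes of such a product obey
$$v_k(C\times C')=\sum_{i+j=k} v_i(C)\,v_j(C'),$$
the conic analogue of the classical product formula, which is available in \cite{AL}.

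Next I would record the intrinsic volumes of the subspace $L$ on its own. A linear subspace of dimension $\ell$ is a cone whose only face is itself, so the metric projection of any Gaussian vector onto $L$ always lands in the relative interior of this single $\ell$-dimensional face. Hence $v_\ell(L)=1$ and $v_i(L)=0$ for all $i\neq\ell$ (see \cite{AL}).

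Substituting $C=L$ and $C'=C_0$ into the product formula, only the term $i=\ell$ survives, giving
$$v_k(L+C_0)=\sum_{i+j=k} v_i(L)\,v_j(C_0)=v_\ell(L)\,v_{k-\ell}(C_0)=v_{k-\ell}(C_0),$$
where $v_{k-\ell}(C_0)$ is read as $0$ whenever $k-\ell<0$, i.e.\ when $k<\ell$ (using the stated convention that out-of-range intrinsic volumes vanish). This reproduces exactly the claimed case distinction.

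The only step demanding real care is the first one: justifying the product rule for conic intrinsic volumes and confirming that $L+C_0$ is honestly the orthogonal product. If the product rule is not taken verbatim from \cite{AL}, I would argue instead through the Gaussian/projection description. Writing a standard Gaussian $g\in\bbR^D$ as $g=g_L+g_\perp$ with independent components in $L$ and $L^\perp$, the metric projection onto $L+C_0$ factors as $g_L+\Pi_{C_0}(g_\perp)$, and the face of $L+C_0$ containing this projection in its relative interior is $L+F$, where $F$ is the face of $C_0$ containing $\Pi_{C_0}(g_\perp)$. Since $\dim(L+F)=\ell+\dim F$, the probability that the projection falls in a $k$-dimensional face equals the probability that $\dim F=k-\ell$, which is precisely $v_{k-\ell}(C_0)$. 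The main obstacle in this alternative is verifying the factorization of the projection and the induced correspondence of faces; both follow from the orthogonal splitting but should be stated carefully.
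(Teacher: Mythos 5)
Your proposal is correct, and your primary route differs from the paper's. The paper disposes of this lemma in one line: it invokes the combinatorial/Gaussian definition of conic intrinsic volumes from \cite{AL} together with the bijection $F\mapsto L+F$ between $(k-\ell)$-dimensional faces of $C_0$ and $k$-dimensional faces of $L+C_0$ --- nothing more. Your main argument instead identifies $L+C_0$ with the orthogonal product $L\times C_0$ under the splitting $\bbR^D=L\oplus L^\perp$ and then applies the product (convolution) rule $v_k(C\times C')=\sum_{i+j=k}v_i(C)\,v_j(C')$, together with $v_i(L)=\delta_{i\ell}$; this is a clean reduction that delegates all the measure-theoretic content to a citable general identity, at the cost of needing that identity to be available (it is a standard property of conic intrinsic volumes, so this is fine). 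Your fallback argument --- factoring the metric projection as $\Pi_{L+C_0}(g)=g_L+\Pi_{C_0}(g_\perp)$ and matching faces $L+F\leftrightarrow F$ --- is essentially the paper's proof written out in full: indeed, the projection factorization is exactly the step the paper leaves implicit in calling the bijection ``obvious,'' so your version makes precise why the face correspondence translates into equality of the defining probabilities. In short: the paper's proof is the more elementary and self-contained one, your product-rule proof is the more modular one, and both are valid.
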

This follows from the definition of the intrinsic volumes from \cite{AL} and the obvious bijection between $k$-dimensional faces of $L+C_0$ and $(k-\ell)$-dimensional faces of $C_0$.
 
In our case, $D_i\cap D_{12}=C(\set{i},[n+2]-\set{1,2})$, and other intersections among $D_1,D_2,D_{12}$ equal $\set{0}$ (see Lemma \ref{partition_cone_lem}).

In particular, Lemma \ref{ie_lem} holds for $k>0$, and we obtain
\begin{equation} \nonumber
\begin{split}
v_{k-1}\Par{C_n(\frac12)}=v_k(C)&=v_k(D_{12})+v_k(D_1)+v_k(D_2)-v_k(D_1\cap D_{12})-v_k(D_2\cap D_{12})\\
&=v_k(2,n) +2v_k(1,n+1)-2v_k(1,n).
\end{split}
\end{equation}
Applying (\ref{formula_m1}) and Proposition \ref{g_values} and simplifying, we obtain for $k=1,2,\ldots,n+1$:
\begin{equation} \label{formula_m2} \begin{split}
v_k(2,n)=&\binom{n}{k-1} g_{k-1}\Par{-\frac1{k+1}}g_{n-k+1}\Par{\frac1{k+1}}\\
&+2g_k\Par{-\frac1{k+1}}\left[\binom{n}{k}g_{n-k}\Par{\frac1{k+1}}-\binom{n+1}{k}g_{n-k+1}\Par{\frac1{k+1}}\right]
\end{split}\end{equation}
under the convention that $\binom{n}{\ell}=0$ for $\ell>n$.

\subsubsection{The case $m\geq 3$}
In principle, the method by which (\ref{formula_m2}) is obtained using inclusion-exclusion, can be applied for every $m$:
\begin{prop} \label{formula_gen}
For every disjoint $A,B\su[N]$ such that $|A|=m,\ |B|=n$ and every $d\in\bbN$, there exists a closed formula for $\bbP_d(A,B)$, consisting only of addition, subtraction, multiplication, integers, and the values $g_\ell\Par{\pm\frac1j}$ where $\ell=0,1,\ldots,m+n-1$ and $j\geq d+2$.
\end{prop}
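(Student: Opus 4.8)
The plan is to reduce the statement to a claim about the conic intrinsic volumes $v_k(m,n)$ and then promote the inclusion--exclusion computation that produced (\ref{formula_m2}) into the inductive step of a recursion on $m$. By (\ref{radon_intrinsic}) we have $\bbP_d(A,B)=2\sum_{i\geq1\text{ odd}}v_{d+i}(m,n)$, a \emph{finite} sum since $v_k(m,n)=0$ for $k\geq m+n$; so it suffices to show that, for each fixed integer $k\geq1$, the quantity $v_k(m,n)$ admits a closed formula of the asserted type in which every $g$-value has the form $g_\ell\Par{\pm\tfrac1{k+1}}$ and every subscript satisfies $0\leq\ell\leq m+n-1$. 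Summing over $k=d+i$ (odd $i\geq1$) then forces $j=k+1\geq d+2$ and $\ell\leq m+n-1$, which is exactly the claimed shape. I would argue by induction on $m$, holding $k$ fixed throughout.

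The base case $m=1$ is precisely (\ref{formula_m1}), whose arguments are $\pm\tfrac1{k+1}$. For the inductive step, fix $A,B$ with $|A|=m\geq2$, $|B|=n$, and consider the cone
$$C=\set{\alpha\in\An\ :\ \alpha_j\leq 0\text{ for all }j\in B}.$$
Writing $u=\tfrac1m\sum_{i\in A}e_i$, a direct inner-product computation gives $\langle u-e_j,u-e_{j'}\rangle=\tfrac1m+\delta_{jj'}$, so $C_0:=\cone\set{u-e_j:j\in B}$ is isometric to $C_n(\tfrac1m)$; moreover $C=L\oplus C_0$ with $L=\spa\set{e_i-e_{i'}:i,i'\in A}$ of dimension $m-1$ and $L\perp C_0$. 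By Lemma \ref{cone_sum} and Proposition \ref{g_values}(0),
$$v_k(C)=v_{k-m+1}\Par{C_n(\tfrac1m)}=\binom{n}{k-m+1}g_{k-m+1}\Par{-\tfrac1{k+1}}g_{n-k+m-1}\Par{\tfrac1{k+1}},$$
where the decisive simplification is $\tfrac{1/m}{1+(k-m+1)/m}=\tfrac1{k+1}$; this is the source of the uniform argument $\pm\tfrac1{k+1}$, and the subscripts $k-m+1$ and $n+m-1-k$ fall in $[0,m+n-1]$ whenever the corresponding binomial is nonzero (out-of-range terms vanish).

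Next I would write $C$ as a union of partition cones and invert. Since $\alpha\in\An$ lies in $C$ exactly when its positive support inside $A$ is some nonempty $A'\su A$, we have $C=\bigcup_{\emp\neq A'\su A}C(A',(A\setminus A')\cup B)$. Lemma \ref{partition_cone_lem} then identifies every multiple intersection: the intersection of the cones indexed by distinct $A'_1,\ldots,A'_t$ equals $C\Par{\bigcap_s A'_s,\ (A\setminus\bigcup_s A'_s)\cup B}$, a single partition cone (or $\set0$ when $\bigcap_s A'_s=\emp$). Crucially, the positive block of any such intersection has size $|\bigcap_s A'_s|$, which equals $m$ only for the single term $A'=A$ and is strictly smaller otherwise; hence every term other than $v_k(C(A,B))$ is a $v_k(p,q)$ with $p<m$ and $p+q\leq m+n$, covered by the induction hypothesis with arguments $\pm\tfrac1{k+1}$ and subscripts $\leq p+q-1\leq m+n-1$ (the $\set0$ terms contribute $0$). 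Applying Lemma \ref{ie_lem}, which is legitimate because $k\geq1$ (the proper partition cones are not subspaces, and the $\set0$ terms are $0$-dimensional, so none of the inputs is a subspace of dimension $k$ or $k+1$), and solving for the unique term $v_k(C(A,B))=v_k(m,n)$, expresses it as an integer combination of $v_k(C)$ and strictly-lower $v_k(p,q)$, completing the induction.

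The main obstacle is the bookkeeping of the inductive step rather than any single hard estimate: one must confirm the splitting $C=L\oplus C_0$ together with the isometry $C_0\cong C_n(\tfrac1m)$, verify through Lemma \ref{partition_cone_lem} that every intersection collapses to a single partition cone whose positive block is strictly below $m$ (so that nothing circular enters the recursion), and check the hypotheses of Lemma \ref{ie_lem} for all $2^m-1$ cones and their intersections. The one genuinely delicate point is keeping the $g$-arguments pinned at $\pm\tfrac1{k+1}$ and the subscripts within $[0,m+n-1]$ at every level; these are exactly the invariants needed to match the precise form of the statement, and the identity $\tfrac{1/m}{1+(k-m+1)/m}=\tfrac1{k+1}$ is what makes them propagate.
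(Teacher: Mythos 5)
Your proposal is correct and takes essentially the same route as the paper's proof: reduce via (\ref{radon_intrinsic}) to producing a closed formula for $v_k(m,n)$ with $g$-arguments $\pm\frac{1}{k+1}$, then induct on $m$ by applying the inclusion-exclusion of Lemma \ref{ie_lem} to the cones $C(A',(A\setminus A')\cup B)$ for $\emptyset\neq A'\subseteq A$, whose union is $L\oplus C_0$ with $C_0$ isometric to $C_n(\frac1m)$ and is handled by Lemma \ref{cone_sum} and Proposition \ref{g_values}(0), while Lemma \ref{partition_cone_lem} collapses all intersections to partition cones with smaller positive block. Your write-up merely makes explicit some details the paper leaves implicit (the half-space description of the union, the Gram-matrix check, the verification of Lemma \ref{ie_lem}'s hypotheses, and the identity pinning the arguments at $\pm\frac{1}{k+1}$).
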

\begin{proof}
By applying (\ref{radon_intrinsic}), it suffices to prove that for every $k,m,n\in\bbN$ such that $1\leq k\leq m+n-1$, there exists a closed formula for $v_k(m,n)$ in terms of the values $g_\ell\Par{\pm\frac1{k+1}}$, for $\ell=0,1,\ldots,m+n-1$. This is done by generalizing the proof leading to (\ref{formula_m2}), using inclusion-exclusion:

Let $m,n\in\bbN$ and consider the collection of cones
$$\calC_m= \set{C(A,[m+n]-A)\ :\ \emp\neq A\su[m]}.$$
Then $\cup\calC_m=L+C_0$, where
$$L=\spa\set{e_1-e_i\ :\ i=2,\ldots,m}$$
and
$$C_0=\cone\set{\frac1m(e_1+\ldots+e_m)-e_i\ :\ i=m+1,\ldots,m+n},$$
which is isometric to $C_n(\frac1m)$. Thus Lemma \ref{ie_lem} applies to $\cup\calC_m$, and (\ref{IE}) becomes 
\begin{equation} \label{sum_general_m}
v_k\Par{\cup\calC_m}=\sum_{\calC'} (-1)^{|\calC'|+1} v_k\Par{\cap\calC'},
\end{equation}
where the sum is extended over all non-empty subcollections $\calC'\su\calC_m$. From Lemma \ref{cone_sum}, $v_k\Par{\cup\calC_m}=v_k\Par{L+C_n(\frac1m)}$ is either $0$ (if $k<m-1$) or
$$v_k\Par{\cup\calC_m}=v_{k-m+1}\Par{C_n(\frac1m)},$$
which has a formula as required from Proposition \ref{g_values} (0). It follows from Lemma \ref{partition_cone_lem} that the right hand side of (\ref{sum_general_m}) consists of $v_k(m,n)$ and a combination of terms $v_k(m',n')$ with $m'<m$. Our claim now follows by induction on $m$.
\end{proof}
For $m=3$, we obtain for $k=1,2,\ldots,n+2$:
\begin{equation} \label{formula_m3} \begin{split}
v_k(3,n)=&\binom{n}{k-2}g_{k-2}^-g_{n-k+2}^+\\
&+3g_{k-1}^-\left[\binom{n}{k-1}g_{n-k+1}^+ -\binom{n+1}{k-1}g_{n-k+2}^+\right]\\
&+3g_k^-\left[\binom{n}{k}g_{n-k}^+ -2\binom{n+1}{k}g_{n-k+1}^+ +\binom{n+2}{k}g_{n-k+2}^+\right]
\end{split}\end{equation}
under the convention that $\binom{n}{\ell}=0$ for $\ell<0$ or $\ell>n$, where $g_\ell^\pm$ is short for $g_\ell\Par{\pm\frac1{k+1}}$.

\subsubsection{The case $k=m+n-1$}
While the general formula of $v_k(m,n)$ is quite complicated, we can simplify it when $k=m+n-1$:
\begin{prop} \label{formula_kmax}
Let $n,m\in\bbN$. Then
$$v_{m+n-1}(m,n)=\sum_{i=0}^{m-1}(-1)^i\binom{m}{i}g_{n+i}\Par{-\frac1{m+n}}.$$
\end{prop}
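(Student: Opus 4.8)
The plan is to specialize the inclusion--exclusion identity (\ref{sum_general_m}) to the top degree $k = m+n-1$, where a dimension count annihilates all but the simplest terms, and then to invert the resulting triangular recursion.

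First I would fix $N = m+n$ and reuse the collection $\calC_m$ together with the decomposition $\cup\calC_m = L + C_n(\tfrac1m)$, $\dim L = m-1$, from the proof of Proposition \ref{formula_gen}. Lemma \ref{cone_sum} then gives $v_{m+n-1}(\cup\calC_m) = v_n\Par{C_n(\tfrac1m)}$, which is the \emph{top} intrinsic volume of the $n$-dimensional cone $C_n(\tfrac1m)$. Evaluating Proposition \ref{g_values}(0) at $k = n$ and $r = \tfrac1m$, and using $g_0\equiv 1$ together with $\tfrac{r}{1+nr} = \tfrac1{m+n}$, collapses the left-hand side to $g_n\Par{-\tfrac1{m+n}}$.

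The heart of the argument is the right-hand side of (\ref{sum_general_m}). By Lemma \ref{partition_cone_lem}, a subcollection $\calC'$ given by distinct subsets $A_1,\ldots,A_j\su[m]$ intersects in $\cap\calC' = C\Par{\cap_i A_i,\ [N]-\cup_i A_i}$, a cone of dimension $\abs{\cap_i A_i} + \Par{N - \abs{\cup_i A_i}} - 1$ (or $\set{0}$ when $\cap_i A_i=\emp$). By the convention $v_k\equiv 0$ above the dimension, the term $v_{m+n-1}(\cap\calC')$ vanishes unless this cone is full-dimensional in $\An$, i.e.\ unless $\abs{\cap_i A_i} = \abs{\cup_i A_i}$; but $\cap_i A_i\su\cup_i A_i$, so this forces all $A_i$ to coincide, and since they are distinct only the singleton subcollections contribute. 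Grouping the surviving $\emp\neq A\su[m]$ by size, (\ref{sum_general_m}) reduces to
$$g_n\Par{-\tfrac1{m+n}} = \sum_{a=1}^m \binom ma\, v_{m+n-1}(a,\ m+n-a).$$

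Finally I would read this relation for all $m$ with $N$ fixed, obtaining a triangular system in the unknowns $t_a := v_{N-1}(a, N-a)$. Setting $t_0 := 0$, it becomes $g_{N-m}\Par{-\tfrac1N} = \sum_{a=0}^m \binom ma t_a$; the case $m=0$ is consistent precisely because $g_N\Par{-\tfrac1N}=0$ by Proposition \ref{g_values}(4) (here $N\geq 2$). Binomial inversion then yields $t_m = \sum_{a=0}^m (-1)^{m-a}\binom ma g_{N-a}\Par{-\tfrac1N}$, whose $a=0$ term again vanishes since $g_N\Par{-\tfrac1N}=0$; re-indexing by $i = m-a$ and substituting $N-m = n$ gives exactly the claimed sum over $i=0,\ldots,m-1$. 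I expect the main obstacle to be establishing and justifying the dimension collapse that discards every non-singleton term of the inclusion--exclusion; once the recursion is in place the inversion is routine, the only subtle point being the use of $g_N(-1/N)=0$ to fix the upper limit at $m-1$ rather than $m$.
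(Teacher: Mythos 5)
Your proposal is correct and follows essentially the same route as the paper: the same specialization of (\ref{sum_general_m}) to the top degree $k=m+n-1$, the same dimension-count showing that every non-singleton intersection $C\Par{\cap_i A_i,[N]-\cup_i A_i}$ has dimension below $m+n-1$ and hence contributes nothing, and the same resulting recursion $g_n\Par{-\frac1{m+n}}=\sum_{j=1}^m\binom{m}{j}v_{m+n-1}(j,m+n-j)$. The only difference is cosmetic: you solve this triangular system by binomial inversion (using $g_N\Par{-\frac1N}=0$ to anchor the $m=0$ level and to truncate the sum at $i=m-1$), whereas the paper solves the identical recursion by induction on $m$.
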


\begin{proof}
We start by applying the inclusion-exclusion method on $\calC_m$ from above, and show that (\ref{sum_general_m}) simplifies when $k=n+m-1$. Indeed, a partition cone $C(A,B)$ has dimension $|A|+|B|-1$. If $A\cup B$ and $A'\cup B'$ are two partitions of $[m+n]$, then $C(A,B)\cap C(A',B')=C(A\cap A',B\cap B')$ has dimension strictly less than $m+n-1$. Thus $v_{m+n-1}(\cap\calC')=0$ whenever $\calC'$ consists of more than one cone, and (\ref{sum_general_m}) for $k=m+n-1$ simplifies to
\begin{equation} \nonumber
\begin{split}v_{m+n-1}(\cup\calC_m)&=\sum_{\emp\neq A\su[m]} v_{m+n-1}\Par{C(A,[m+n]-A)}\\
&=\sum_{j=1}^m \binom{m}{j}v_{m+n-1}(j,m+n-j),
\end{split}\end{equation}
Proposition \ref{g_values} and Lemma \ref{cone_sum} allow us to substitute $v_{m+n-1}(\cup\calC_m)=v_n\Par{C_n(\frac1m)}=g_n\Par{-\frac1{m+n}}$, and rearranging gives us
\begin{equation} \label{sum_maxk}
v_{m+n-1}(m,n)=g_n\Par{-\frac1{m+n}}-\sum_{j=1}^{m-1} \binom{m}{j} v_{m+n-1}\Par{j,m+n-j}.
\end{equation}
We can now prove our claim by induction on $m$. For $m=1$, take (\ref{formula_m1}) with $k=n$. For $m>1$, take (\ref{sum_maxk}) and substitute each $v_{m+n-1}(j,m+n-j)$ using the induction hypothesis. The proof follows.
\end{proof}

\begin{cor} \label{curious_identity}
Clearly $v_k(m,n)=v_k(n,m)$, so proposition \ref{formula_kmax} implies the following curious identity: for every $n,m\in\bbN$, 
$$\sum_{i=0}^{m-1}(-1)^i\binom{m}{i}g_{n+i}\Par{-\frac1{m+n}}=\sum_{i=0}^{n-1}(-1)^i\binom{n}{i}g_{m+i}\Par{-\frac1{m+n}}.$$
\end{cor}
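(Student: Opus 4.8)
The plan is to deduce the identity directly from Proposition \ref{formula_kmax}, by exploiting the symmetry $v_k(m,n)=v_k(n,m)$ of the intrinsic volumes under interchanging the two parts. The entire content of the corollary is that Proposition \ref{formula_kmax} furnishes two a priori different closed formulas for one and the same quantity, namely $v_{m+n-1}$ of a partition cone with parts of sizes $m$ and $n$.

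First I would establish the symmetry $v_{m+n-1}(m,n)=v_{m+n-1}(n,m)$ carefully. Recall that $v_k(m,n)$ is defined as $v_k\Par{C(A,B)}$ for any disjoint $A,B\su[m+n]$ with $|A|=m$ and $|B|=n$, which is unambiguous because all such partition cones are isometric. To relate this to $v_k(n,m)$, observe that the partition cone $C(B,A)$ satisfies $C(B,A)=-C(A,B)$. Since the negation map $x\mapsto-x$ is an orthogonal transformation of $\RN$, hence an isometry, and conic intrinsic volumes are isometry invariants, we obtain
\[
v_k\Par{C(A,B)}=v_k\Par{-C(A,B)}=v_k\Par{C(B,A)}.
\]
As $C(B,A)$ is a partition cone whose first part has size $n$ and second part has size $m$, this yields $v_k(m,n)=v_k(n,m)$ for every $k$, and in particular at $k=m+n-1$.

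Next I would apply Proposition \ref{formula_kmax} twice. Substituting the pair $(m,n)$ gives
\[
v_{m+n-1}(m,n)=\sum_{i=0}^{m-1}(-1)^i\binom{m}{i}g_{n+i}\Par{-\frac1{m+n}},
\]
while substituting the swapped pair $(n,m)$ gives
\[
v_{m+n-1}(n,m)=\sum_{i=0}^{n-1}(-1)^i\binom{n}{i}g_{m+i}\Par{-\frac1{m+n}},
\]
where I have used that $n+m=m+n$, so the argument $-\frac1{m+n}$ of every $g_\ell$ is the same in both expansions. Equating the two left-hand sides through the symmetry established in the previous step yields exactly the claimed identity.

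I do not anticipate a genuine obstacle here; the only point that deserves explicit justification is the symmetry step, namely that negation is an orthogonal transformation and therefore an isometry, so that the isometry-invariance of intrinsic volumes may be applied to $-C(A,B)$. Everything else is a direct substitution into a proposition already proved.
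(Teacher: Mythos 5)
Your proposal is correct and follows exactly the paper's (one-line) argument: the corollary is stated as an immediate consequence of the symmetry $v_k(m,n)=v_k(n,m)$ together with two applications of Proposition \ref{formula_kmax}, which is precisely what you do. Your only addition is to justify the ``clearly'' via $C(B,A)=-C(A,B)$ and the isometry-invariance of conic intrinsic volumes under negation, which is a sound (and welcome) elaboration of the same approach.
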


\subsection{Are balanced partitions more common?}
Considering partitions $[N]=A\cup B$, which are more likely to appear as Radon partitions in our random setting: those where one of $A,B$ is small and the other is large, or more ``balanced'' partitions, where $A,B$ have roughly the same number of points? Assume w.l.o.g. that $|A|\leq|B|$. We say that $(A,B)$ is balanced if $|A|=\lfloor\frac N2\rfloor$ (in the other extreme, we have $|A|=1$). There is some reason to suspect that balanced partitions are more common: if we take $N$ points in convex position, none of the Radon partitions $A,B$ have $|A|=1$. If our $N$ points are on the moment curve in $\Rd$, any Radon partition $(A,B)$ has $|A|\geq\lfloor\frac{d+2}{2}\rfloor$ (see \cite{Br} and \cite{HK}). On the other hand, every $N=d+3$ points in $\Rd$ have some balanced Radon partition (this is due to Flores and Van Kampen and holds even as a continuous analogue of Radon's theorem, see e.g. \cite[Theorem 6.1]{BFZ}, also see \cite{Sh}).

In this sense, balanced partitions are unavoidable for large $N$, while non-balanced partitions can be avoided. We conjecture that  for every $N,d$, the probability of finding a partition $[N]=A\cup B$ as a Radon partition among $N$ points in $\Rd$, is higher the more balanced our partition is:

\begin{conj} \label{balanced_prob}
Let $N,d\in\bbN$ such that $N\geq d+2$, and let $[N]=A\cup B=A'\cup B'$ be two partitions such that $|A|<|A'|\leq|B'|<|B|$. Then
$$\bbP_d(A,B)<\bbP_d(A',B').$$
\end{conj}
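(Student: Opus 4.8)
The plan is to reduce to a single elementary step and then compare two partition cones differing in exactly one coordinate. Since $x_1,\ldots,x_N$ are i.i.d., $\bbP_d(A,B)$ depends only on the pair of sizes; write $p_d(m,n):=\bbP_d(A,B)$ for $|A|=m$, $|B|=n$. The hypothesis $|A|<|A'|\le|B'|<|B|$ forces $|A|<|A'|\le N/2$, so by transitivity it suffices to prove the consecutive inequality
$$p_d(m,N-m)<p_d(m+1,N-m-1)\qquad\text{whenever } m+1\le N/2,$$
i.e. moving one index from the larger block to the smaller one strictly increases the probability. By (\ref{radon_intrinsic}) this is equivalent, for $n=N-m$, to the tail inequality $\sum_{i\ge1\text{ odd}}v_{d+i}(m+1,n-1)>\sum_{i\ge1\text{ odd}}v_{d+i}(m,n)$, which I would aim to establish for every admissible $d$.

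My first attempt would be geometric, and if it succeeds the result is immediate. In $\An$ the cones $C(m,n):=C([m],[N]\setminus[m])$ and $C(m+1,n-1):=C([m+1],[N]\setminus[m+1])$ are both full dimensional and differ only in the sign constraint on the coordinate $\alpha_{m+1}$. I would try to produce an orthogonal self-map $\phi$ of $\An$ with $\phi\big(C(m,n)\big)\su C(m+1,n-1)$. Given such a $\phi$, the uniform law of the codimension-$d$ subspace $V\su\An$ is isometry invariant, so $p_d(m,n)=\bbP\big(\phi(C(m,n))\cap V\neq\set0\big)\le\bbP\big(C(m+1,n-1)\cap V\neq\set0\big)=p_d(m+1,n-1)$ by inclusion of events, with strictness following (once the containment is proper) from the positive probability that $V$ meets $C(m+1,n-1)$ only through the region outside $\phi(C(m,n))$. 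The extreme intrinsic volumes are consistent with such a containment: Corollary \ref{v_0} gives $v_0(m,n)=\binom Nm^{-1}>\binom N{m+1}^{-1}=v_0(m+1,n-1)$ since $m+1\le N/2$, which is precisely the inequality forced by $C(m,n)\su C(m+1,n-1)$ via polarity.

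If the containment proves elusive, I would fall back on the explicit formulas. The slab $D:=C(m,n)\cup C(m+1,n-1)=\set{\alpha\in\An:\alpha_1,\ldots,\alpha_m\ge0,\ \alpha_{m+2},\ldots,\alpha_N\le0}$ is a pointed cone, with $C(m,n)\cap C(m+1,n-1)=C([m],[N]\setminus[m+1])$ of type $(m,n-1)$, so Lemma \ref{ie_lem} gives $v_k(D)=v_k(m,n)+v_k(m+1,n-1)-v_k(m,n-1)$; feeding this into (\ref{kinematic_formula}) yields
$$\bbP(D\cap V\neq\set0)=p_d(m,n)+p_d(m+1,n-1)-\bbP^{(N-1)}_d(m,n-1),$$
where the last term is the Radon probability of a type-$(m,n-1)$ partition of $N-1$ points. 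Since $D$ contains each of $C(m,n)$ and $C(m+1,n-1)$, this identity pins down the \emph{sum} $p_d(m,n)+p_d(m+1,n-1)$ and shows each summand dominates the $(N-1)$-point value, but it is symmetric in the two quantities and hence reveals nothing about their difference. To break the tie I would pass to the closed formula of Proposition \ref{formula_gen}, reducing the desired inequality (for fixed $N$) to a finite inequality among the values $g_\ell\Par{\pm\frac1{k+1}}$, and attempt an induction on $N$ and on $m$ driven by the recursion of Proposition \ref{g_values}.

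The main obstacle is exactly this asymmetry. Every linear tool available — the kinematic formula (\ref{kinematic_formula}), Gauss--Bonnet (\ref{gauss-bonnet}), and the inclusion--exclusion of Lemma \ref{ie_lem} — is symmetric under interchanging $C(m,n)$ and $C(m+1,n-1)$, so none of them can by itself certify that the balanced side is strictly larger; some genuinely asymmetric input is needed. I expect the make-or-break step to be establishing the isometric containment $\phi(C(m,n))\su C(m+1,n-1)$ (or, more weakly, the monotonicity of the specific functional $2\sum_{i\ge1\text{ odd}}v_{d+i}$ under this sign flip), since the analytic route must otherwise control the delicate alternating signs in the inclusion--exclusion expansion and the arcsine-type transcendentals $g_\ell$, where the requisite positivity is far from manifest.
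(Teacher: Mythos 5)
You have attempted to prove a statement that the paper itself does not prove: Conjecture \ref{balanced_prob} is stated as an open conjecture, supported only by heuristic evidence (points in convex position avoid unbalanced Radon partitions, points on the moment curve force $|A|\geq\lfloor\frac{d+2}{2}\rfloor$, and the Flores--Van Kampen result makes balanced partitions unavoidable). So the only question is whether your argument closes the gap, and it does not: both of your routes end at announced intentions rather than completed steps. To give credit where due, several of your partial observations are correct. The reduction by transitivity to the consecutive inequality $p_d(m,N-m)<p_d(m+1,N-m-1)$ for $m+1\leq N/2$ is sound. The union $D=C(m,n)\cup C(m+1,n-1)$ is indeed a pointed cone, its intersection structure follows from Lemma \ref{partition_cone_lem}, and the resulting identity from Lemma \ref{ie_lem} and (\ref{kinematic_formula}) --- that $p_d(m,n)+p_d(m+1,n-1)$ minus the $(N-1)$-point probability of type $(m,n-1)$ equals the hitting probability of $D$ --- is correct and in the spirit of the paper's own computations. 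Your diagnosis that every such identity is symmetric in the two cones, and therefore cannot separate them, is also accurate and is precisely why the conjecture is hard.

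The genuine gap is the asymmetric step itself, in either form. The isometric containment $\phi\Par{C(m,n)}\su C(m+1,n-1)$ is never constructed, and you should be aware of how strong it is: since hitting probabilities are monotone under inclusion and the law of $V$ is rotation invariant, a single containment would yield the inequality for \emph{every} $d$ simultaneously, so it is strictly stronger than the conjecture; your $v_0$ polarity check is only a necessary condition, not evidence of sufficiency. For what it is worth, the containment does hold in the smallest case $N=4$: the circumradius of $C(1,3)$ about its natural axis and the inradius of $C(2,2)$ about its natural axis are both $\arcsin\Par{1/\sqrt3}$, so aligning the two axes embeds the triangular cone into the square cone. But already for $C(1,4)$ versus $C(2,3)$ this comparison fails (the squared sine of the circumradius is $3/8$, while the squared sine of the inradius is only $1/6$), so if a containment exists there it requires a genuinely different construction, and nothing in your write-up supplies one. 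The analytic fallback --- expanding $v_k(m,n)$ via Proposition \ref{formula_gen} and inducting with the recursion in Proposition \ref{g_values} --- is likewise only a plan, and you yourself identify the obstruction (alternating signs in the inclusion--exclusion expansion, arcsine-type values $g_\ell$ with no manifest positivity). As it stands, the conjecture remains open, and your proposal should be read as a reasonable research program with correct preliminary lemmas, not as a proof.
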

Note that we require $N\geq d+2$ because otherwise we have the trivial case $P_d(A,B)=0$.

\section{Open problems described by Radon partitions}

We consider two open problems around Radon partitions, and describe them as problems regarding Radon polytopes.

\subsection{Radon with tolerance}
Let $[N]=A\cup B$ be a Radon partition of some $x_1,\ldots,x_N\in\Rd$. We say that $A\cup B$ is a Radon partition \emph{with tolerance}, if $A\setminus\set{i}\cup B\setminus\set{i}$ is a Radon partition of $x_1,\ldots,x_{i-1},x_{i+1},\ldots,x_N$ for every $i\in[N]$. Let $t(d)$ denote the minimal $N$ such that \emph{any} collection of $N$ points in $\Rd$ admits a Radon partition with tolerance. Larman \cite{La} proved that $t(d)\leq 2d+3$, suggested that equality holds for every $d$, and proved it for $d\leq 4$. A lower bound of $t(d)\geq\lceil\frac{5d}3\rceil$ follows from a result by Ram\'irez-Alfons\'in \cite{RA}.

Clearly, $t(d)>N$ if and only if there exist some points $x_1,\ldots,x_N\in\Rd$ for which every Radon partition is \emph{not} a partition with tolerance (we may assume the points are in general position). We can describe partitions with tolerance in terms of the Radon configuration: we have $P:=P(x_1,\ldots,x_N)=P_N\cap V$, where $V\subset\An$ is a linear subspace of dimension $N-d-1$. From Lemma \ref{radon_cone}, we know that Radon partitions $[N]=A\cup B$ correspond to facets $F=F(A,B)\cap V$ of $P$ (that is, faces of dimension $N-d-2$). Every subface of $F$ is of the form $F(A',B')\cap V$, where $A'\su A$ and $B'\su B$. That is, subfaces of $F$ correspond to a sub-partitions of the points, and therefore $(A,B)$ is a Radon partition with tolerance if and only if $F$ contains $N$ ridges (subfaces of codimension $1$). To summarize, we obtain the following geometric interpretation for Radon partitions with tolerance:
\begin{ques}[Radon with tolerance]
Let $d\in\bbN$ and denote $N=2d+2$. Does there exist a linear subspace $V\subset\An$ of dimension $d+1$, such that whenever $[N]=A\cup B$ is a partition and $F=F(A,B)\cap V$ has dimension $d$, then $F$ has less than $N$ faces of dimension $d-1$.
\end{ques}
For every $d$, the answer to the above is positive if and only if $t(d)=2d+3$.

\subsubsection{Examples}
Consider two different configurations of $6$ points in $\bbR^2$: the first consists of $6$ different points on the unit circle, has a Radon partition with tolerance, and the corresponding Radon polytope has a hexagonal face (see Figure \ref{tolerance_fig} below). The second consists of $5$ points on the unit circle that form the vertices of a regular pentagon, along a sixth point at the origin. This configuration has no partition with tolerance. For the first configuration, the Radon configuration has a hexagonal face, but the polytope corresponding to the second configuration has only triangles, quadrilaterals, and pentagons (see Figure \ref{no_tolerance_fig} below)

\begin{center}
\refstepcounter{figure_num}\label{tolerance_fig}
\begin{tikzpicture}[scale=0.98]
\coordinate (v0) at (360 * 0 / 6:1);
\filldraw (v0) circle (2pt);
\coordinate (v1) at (360 * 1 / 6:1);
\filldraw (v1) circle (2pt);
\coordinate (v2) at (360 * 2 / 6:1);
\filldraw (v2) circle (2pt);
\coordinate (v3) at (360 * 3 / 6:1);
\filldraw (v3) circle (2pt);
\coordinate (v4) at (360 * 4 / 6:1);
\filldraw (v4) circle (2pt);
\coordinate (v5) at (360 * 5 / 6:1);
\filldraw (v5) circle (2pt);

\node at (360 * 0 / 6:1.3) {$x_1$};
\node at (360 * 1 / 6:1.3) {$x_2$};
\node at (360 * 2 / 6:1.3) {$x_3$};
\node at (360 * 3 / 6:1.3) {$x_4$};
\node at (360 * 4 / 6:1.3) {$x_5$};
\node at (360 * 5 / 6:1.3) {$x_6$};

\begin{scope}[shift={(8.5,0)}]\scriptsize
\coordinate (p13_24) at (5.0,0.0);
\coordinate (p24_13) at (-0.667,0.0);
\coordinate (p13_25) at (2.944,1.7);
\coordinate (p25_13) at (-1.0,-0.577);
\coordinate (p13_26) at (2.5,4.33);
\coordinate (p26_13) at (-0.333,-0.577);
\coordinate (p14_25) at (0.998,1.728);
\coordinate (p25_14) at (-0.998,-1.728);
\coordinate (p14_26) at (0.0,3.4);
\coordinate (p26_14) at (-0.0,-1.154);
\coordinate (p15_26) at (-2.5,4.33);
\coordinate (p26_15) at (0.333,-0.577);
\coordinate (p14_35) at (-0.0,1.154);
\coordinate (p35_14) at (-0.0,-3.4);
\coordinate (p14_36) at (-0.998,1.728);
\coordinate (p36_14) at (0.998,-1.728);
\coordinate (p15_36) at (-2.944,1.7);
\coordinate (p36_15) at (1.0,-0.577);
\coordinate (p15_46) at (-5.0,0.0);
\coordinate (p46_15) at (0.667,-0.0);
\coordinate (p24_35) at (-0.333,0.577);
\coordinate (p35_24) at (2.5,-4.33);
\coordinate (p24_36) at (-1.0,0.577);
\coordinate (p36_24) at (2.944,-1.7);
\coordinate (p25_36) at (-1.996,0.0);
\coordinate (p36_25) at (1.996,-0.0);
\coordinate (p25_46) at (-2.944,-1.7);
\coordinate (p46_25) at (1.0,0.577);
\coordinate (p35_46) at (-2.5,-4.33);
\coordinate (p46_35) at (0.333,0.577);

\filldraw (p13_24) circle (2pt) node[right] {$13,24$};
\filldraw (p24_13) circle (2pt);
\filldraw (p13_25) circle (2pt) node[above right] {$13,25$};
\filldraw (p25_13) circle (2pt) node[below left] {$25,13$};
\filldraw (p13_26) circle (2pt) node[above right] {$13,26$};
\filldraw (p26_13) circle (2pt);
\filldraw (p14_25) circle (2pt) node[above right] {$14,25$};
\filldraw (p25_14) circle (2pt) node[below left] {$25,14$};
\filldraw (p14_26) circle (2pt) node[above] {$14,26$};
\filldraw (p26_14) circle (2pt);
\node at ($(p26_14) + (0,-0.4)$) {$26,14$};
\filldraw (p15_26) circle (2pt) node[above left] {$15,26$};
\filldraw (p26_15) circle (2pt);
\filldraw (p14_35) circle (2pt);
\node at ($(p14_35) + (0,0.3)$) {$14,35$};
\filldraw (p35_14) circle (2pt);
\node at ($(p35_14) + (0,-0.3)$) {$35,14$};
\filldraw (p14_36) circle (2pt) node[above left] {$14,36$};
\filldraw (p36_14) circle (2pt) node[below right] {$36,14$};
\filldraw (p15_36) circle (2pt) node[above left] {$15,36$};
\filldraw (p36_15) circle (2pt) node[below right] {$36,15$};
\filldraw (p15_46) circle (2pt) node[left] {$15,46$};
\filldraw (p46_15) circle (2pt);
\filldraw (p24_35) circle (2pt);
\filldraw (p35_24) circle (2pt) node[below right] {$35,24$};
\filldraw (p24_36) circle (2pt) node[above left] {$24,36$};
\filldraw (p36_24) circle (2pt) node[below right] {$36,24$};
\filldraw (p25_36) circle (2pt) node[left] {$25,36$};
\filldraw (p36_25) circle (2pt) node[right] {$36,25$};
\filldraw (p25_46) circle (2pt) node[below left] {$25,46$};
\filldraw (p46_25) circle (2pt) node[above right] {$46,25$};
\filldraw (p35_46) circle (2pt) node[below left] {$35,46$};
\filldraw (p46_35) circle (2pt);

\draw (p13_24) -- (p13_25);
\draw (p13_24) -- (p13_26);
\draw (p13_24) -- (p35_24);
\draw (p13_24) -- (p36_24);
\draw (p24_13) -- (p25_13);
\draw (p24_13) -- (p26_13);
\draw (p24_13) -- (p24_35);
\draw (p24_13) -- (p24_36);
\draw (p13_25) -- (p13_26);
\draw (p13_25) -- (p14_25);
\draw (p13_25) -- (p36_25);
\draw (p25_13) -- (p26_13);
\draw (p25_13) -- (p25_14);
\draw (p25_13) -- (p25_36);
\draw (p13_26) -- (p14_26);
\draw (p13_26) -- (p15_26);
\draw (p26_13) -- (p26_14);
\draw (p26_13) -- (p26_15);
\draw (p14_25) -- (p14_26);
\draw (p14_25) -- (p14_35);
\draw (p14_25) -- (p46_25);
\draw (p25_14) -- (p26_14);
\draw (p25_14) -- (p35_14);
\draw (p25_14) -- (p25_46);
\draw (p14_26) -- (p15_26);
\draw (p14_26) -- (p14_36);
\draw (p26_14) -- (p26_15);
\draw (p26_14) -- (p36_14);
\draw (p15_26) -- (p15_36);
\draw (p15_26) -- (p15_46);
\draw (p26_15) -- (p36_15);
\draw (p26_15) -- (p46_15);
\draw (p14_35) -- (p14_36);
\draw (p14_35) -- (p24_35);
\draw (p14_35) -- (p46_35);
\draw (p35_14) -- (p36_14);
\draw (p35_14) -- (p35_24);
\draw (p35_14) -- (p35_46);
\draw (p14_36) -- (p15_36);
\draw (p14_36) -- (p24_36);
\draw (p36_14) -- (p36_15);
\draw (p36_14) -- (p36_24);
\draw (p15_36) -- (p15_46);
\draw (p15_36) -- (p25_36);
\draw (p36_15) -- (p46_15);
\draw (p36_15) -- (p36_25);
\draw (p15_46) -- (p25_46);
\draw (p15_46) -- (p35_46);
\draw (p46_15) -- (p46_25);
\draw (p46_15) -- (p46_35);
\draw (p24_35) -- (p24_36);
\draw (p24_35) -- (p46_35);
\draw (p35_24) -- (p36_24);
\draw (p35_24) -- (p35_46);
\draw (p24_36) -- (p25_36);
\draw (p36_24) -- (p36_25);
\draw (p25_36) -- (p25_46);
\draw (p36_25) -- (p46_25);
\draw (p25_46) -- (p35_46);
\draw (p46_25) -- (p46_35);
\end{scope}
\end{tikzpicture}
\footnotesize\\
Figure \arabic{figure_num}: points $x_1,\ldots,x_6\in\bbR^2$ (left), and the graph of the corresponding Radon polytope (right).\\The Radon partition $A,B=135,246$ is a partition with tolerance, and corresponds to the hexagonal outer face. Note that the innermost vertex labels have been omitted.
\end{center}

\begin{center}
\refstepcounter{figure_num}\label{no_tolerance_fig}
\begin{tikzpicture}[scale=0.98]
\coordinate (v0) at (360 * 0 / 5:1);
\filldraw (v0) circle (2pt);
\coordinate (v1) at (360 * 1 / 5:1);
\filldraw (v1) circle (2pt);
\coordinate (v2) at (360 * 2 / 5:1);
\filldraw (v2) circle (2pt);
\coordinate (v3) at (360 * 3 / 5:1);
\filldraw (v3) circle (2pt);
\coordinate (v4) at (360 * 4 / 5:1);
\filldraw (v4) circle (2pt);
\coordinate (v5) at (0,0) node[above] {$x_6$};
\filldraw (v5) circle (2pt);

\node at (360 * 0 / 5:1.3) {$x_1$};
\node at (360 * 1 / 5:1.3) {$x_2$};
\node at (360 * 2 / 5:1.3) {$x_3$};
\node at (360 * 3 / 5:1.3) {$x_4$};
\node at (360 * 4 / 5:1.3) {$x_5$};

\begin{scope}[shift={(8.5,0)}]\scriptsize
\coordinate (p13_24) at (-1.172,1.894);
\coordinate (p24_13) at (1.439,-1.699);
\coordinate (p13_25) at (-0.165,2.221);
\coordinate (p25_13) at (-0.165,-2.221);
\coordinate (p13_26) at (-0.963,2.965);
\coordinate (p26_13) at (0.428,-1.317);
\coordinate (p14_25) at (1.439,1.699);
\coordinate (p25_14) at (-1.172,-1.894);
\coordinate (p124_6) at (5.0,0.0);
\coordinate (p6_124) at (-0.682,0.0);
\coordinate (p16_25) at (0.428,1.317);
\coordinate (p25_16) at (-0.963,-2.965);
\coordinate (p14_35) at (2.061,0.844);
\coordinate (p35_14) at (-2.163,-0.529);
\coordinate (p134_6) at (1.545,4.755);
\coordinate (p6_134) at (-0.211,-0.649);
\coordinate (p135_6) at (-4.045,2.939);
\coordinate (p6_135) at (0.552,-0.401);
\coordinate (p14_56) at (2.522,1.833);
\coordinate (p56_14) at (-1.12,-0.814);
\coordinate (p24_35) at (2.061,-0.844);
\coordinate (p35_24) at (-2.163,0.529);
\coordinate (p24_36) at (2.522,-1.833);
\coordinate (p36_24) at (-1.12,0.814);
\coordinate (p235_6) at (-4.045,-2.939);
\coordinate (p6_235) at (0.552,0.401);
\coordinate (p245_6) at (1.545,-4.755);
\coordinate (p6_245) at (-0.211,0.649);
\coordinate (p35_46) at (-3.118,0.0);
\coordinate (p46_35) at (1.385,0.0);

\filldraw (p13_24) circle (2pt) node[above left] {$13,24$};
\filldraw (p24_13) circle (2pt);
\node at ($(p24_13) + (0,-0.275)$) {$24,13$};
\filldraw (p13_25) circle (2pt);
\node at ($(p13_25) + (0.4,0.275)$) {$13,25$};
\filldraw (p25_13) circle (2pt);
\node at ($(p25_13) + (0.4,-0.275)$) {$25,13$};
\filldraw (p13_26) circle (2pt) node[above left] {$13,26$};
\filldraw (p26_13) circle (2pt) node[above right] {$26,13$};
\filldraw (p14_25) circle (2pt) node[above] {$14,25$};
\filldraw (p25_14) circle (2pt) node[left] {$25,14$};
\filldraw (p124_6) circle (2pt) node[right] {$124,6$};
\filldraw (p6_124) circle (2pt);
\filldraw (p16_25) circle (2pt) node[left] {$16,25$};
\filldraw (p25_16) circle (2pt);
\node at ($(p25_16) + (-0.4,-0.275)$) {$25,16$};
\filldraw (p14_35) circle (2pt) node[right] {$14,35$};
\filldraw (p35_14) circle (2pt) node[below left] {$35,14$};
\filldraw (p134_6) circle (2pt) node[above right] {$134,6$};
\filldraw (p6_134) circle (2pt);
\filldraw (p135_6) circle (2pt) node[above left] {$135,6$};
\filldraw (p6_135) circle (2pt);
\filldraw (p14_56) circle (2pt) node[above right] {$14,56$};
\filldraw (p56_14) circle (2pt) node[below right] {$56,14$};
\filldraw (p24_35) circle (2pt) node[right] {$24,35$};
\filldraw (p35_24) circle (2pt) node[above left] {$35,24$};
\filldraw (p24_36) circle (2pt) node[below right] {$24,36$};
\filldraw (p36_24) circle (2pt);
\node at ($(p36_24) + (-0.24,-0.325)$) {$36,24$};
\filldraw (p235_6) circle (2pt) node[below left] {$235,6$};
\filldraw (p6_235) circle (2pt);
\filldraw (p245_6) circle (2pt) node[below right] {$245,6$};
\filldraw (p6_245) circle (2pt);
\filldraw (p35_46) circle (2pt) node[left] {$35,46$};
\filldraw (p46_35) circle (2pt);
\node at ($(p46_35) + (-0.15,0.285)$) {$46,35$};

\draw (p13_24) -- (p13_25);
\draw (p13_24) -- (p13_26);
\draw (p13_24) -- (p35_24);
\draw (p13_24) -- (p36_24);
\draw (p24_13) -- (p25_13);
\draw (p24_13) -- (p26_13);
\draw (p24_13) -- (p24_35);
\draw (p24_13) -- (p24_36);
\draw (p13_25) -- (p13_26);
\draw (p13_25) -- (p14_25);
\draw (p13_25) -- (p16_25);
\draw (p25_13) -- (p26_13);
\draw (p25_13) -- (p25_14);
\draw (p25_13) -- (p25_16);
\draw (p13_26) -- (p134_6);
\draw (p13_26) -- (p135_6);
\draw (p26_13) -- (p6_134);
\draw (p26_13) -- (p6_135);
\draw (p14_25) -- (p16_25);
\draw (p14_25) -- (p14_35);
\draw (p14_25) -- (p14_56);
\draw (p25_14) -- (p25_16);
\draw (p25_14) -- (p35_14);
\draw (p25_14) -- (p56_14);
\draw (p124_6) -- (p134_6);
\draw (p124_6) -- (p14_56);
\draw (p124_6) -- (p24_36);
\draw (p124_6) -- (p245_6);
\draw (p6_124) -- (p6_134);
\draw (p6_124) -- (p56_14);
\draw (p6_124) -- (p36_24);
\draw (p6_124) -- (p6_245);
\draw (p16_25) -- (p6_235);
\draw (p16_25) -- (p6_245);
\draw (p25_16) -- (p235_6);
\draw (p25_16) -- (p245_6);
\draw (p14_35) -- (p14_56);
\draw (p14_35) -- (p24_35);
\draw (p14_35) -- (p46_35);
\draw (p35_14) -- (p56_14);
\draw (p35_14) -- (p35_24);
\draw (p35_14) -- (p35_46);
\draw (p134_6) -- (p135_6);
\draw (p134_6) -- (p14_56);
\draw (p6_134) -- (p6_135);
\draw (p6_134) -- (p56_14);
\draw (p135_6) -- (p235_6);
\draw (p135_6) -- (p35_46);
\draw (p6_135) -- (p6_235);
\draw (p6_135) -- (p46_35);
\draw (p24_35) -- (p24_36);
\draw (p24_35) -- (p46_35);
\draw (p35_24) -- (p36_24);
\draw (p35_24) -- (p35_46);
\draw (p24_36) -- (p245_6);
\draw (p36_24) -- (p6_245);
\draw (p235_6) -- (p245_6);
\draw (p235_6) -- (p35_46);
\draw (p6_235) -- (p6_245);
\draw (p6_235) -- (p46_35);
\end{scope}
\end{tikzpicture}
\footnotesize\\
Figure \arabic{figure_num}: points $x_1,\ldots,x_6\in\bbR^2$ (left), and the graph of the corresponding Radon polytope (right).\\These points have no partition with tolerance, and the polytope has no hexagonal face. Note that the innermost vertex labels have been omitted.
\end{center}

\subsection{Reay's relaxed Tverberg conjecture}
Consider the following problem: given $N$ points in $\Rd$, can we find \emph{three} disjoint sets $A,B,C\su[N]$, such that $(A,B),\ (A,C),\ (B,C)$ are all Radon partitions? We call $(A,B,C)$ a Reay partition. Let $T(d,3,2)$ denote the smallest $N$ such that \emph{every} collection of $N$ points in $\Rd$ has a Reay partition. This is one case in a problem introduced by Reay in 1979, who conjectured that $T(d,3,2)=2d+3$ (in general $T(d,r,k)$ is the minimal $N$ such that every set of $N$ points in $\Rd$ can be partitioned into $r$ disjoint sets, such that the convex hull of every $k$ among them has non-empty intersection, see \cite{Re}). It follows from Tverberg's theorem that $T(d,3,2)\leq 2d+3$, and equality was proved for $d\leq 5$ by Perles and Sigron \cite{PS}.

Since Reay's conjecture can be formulated using Radon partitions, it can be understood as a question about Radon polytopes:
\begin{ques}
Let $d\in\bbN$ and denote $N=2d+2$. Does there exist a linear subspace $V\subset\An$ of dimension $d+1$, such that for every disjoint $A,B,C\subset[N]$, at least one of the intersections $V\cap F(A,B),\ V\cap F(A,C),\ V\cap F(B,C)$ is empty?
\end{ques}
For every $d$, the answer to the above is positive if and only if Reay's conjecture holds for $(d,3,2)$.

It seems possible to obtain a kinematic formula for multiple cones, generalizing (\ref{kinematic_formula}) and computing the probability that random collection of $N$ points in $\Rd$ obtains a certain Reay partition. For $6$ points, simulations allow us to estimate
$$\bbP(6\text{ points in }\bbR^2\text{ have a Reay partition})\approx 0.427$$
(more accurately, the probability is $0.42714\pm0.00016$ with $99.7\%$ confidence, by simulating $82\cdot10^6$ times).

\section{Appendix - some values of $v_k(m,n)$}
We list some closed formulas and some numeric approximations of $v_k(m,n)$, for $m=1,2,3$, as well as for $k=m+n-1$, in the tables below. In this section, $a_j=\arcsin\frac1j$, a blank value indicates $0$, and ``?'' indicates no known formula. W.l.o.g assume $m\leq n$.

For closed formulas, the case $k=0$ follows from Corollary \ref{v_0}. In general we apply (\ref{formula_m1}), (\ref{formula_m2}), and (\ref{formula_m3}), where Proposition \ref{g_values} 1-4 directly applies if only values of $g_\ell$ for $\ell\leq 3$ are involved. We then deduce the value of $v_4(1,4)=g_4(-\frac15)$ from (\ref{gauss-bonnet}), which allows us to compute a few more values.

\subsection{The case $m=1$}

Some closed form values of $v_k(1,n)$ are:
\[\def\arraystretch{1.4}
\begin{array}{|c|c|c|c|c|c|c|c|c|}
\hline
\raisebox{-.2em}{$k$} \diagdown \raisebox{.2em}{$n$} &1 &2 &3 &4 &5 &6 &7 &8 \\
\hline
0 &\frac12 &\frac13 &\frac14 &\frac15 &\frac16 &\frac17 &\frac18 &\frac19 \\
\hline
1 &\frac12 &\frac12 &\frac38+\frac3{4\pi}\ar3 &\frac14+\frac3{2\pi}\ar3 &? &? &? &? \\
\hline
2 & &\frac16 &\frac14 &\frac14+\frac1{2\pi}\ar4 &\frac5{24}+\frac5{4\pi}\ar4 &? &? &? \\
\hline
3 & & &\frac18-\frac3{4\pi}\ar3 &\frac14-\frac3{2\pi}\ar3 &
\begin{split}
\textstyle\Par{\frac58-\frac{15}{4\pi}\ar3}\cdot&\\
\textstyle\Par{\frac12+\frac1{\pi}\ar5}&
\end{split} &
\begin{split}
\textstyle\Par{\frac58-\frac{15}{4\pi}\ar3}\cdot&\\
\textstyle\Par{\frac12+\frac3{\pi}\ar5}&
\end{split} &? &? \\
\hline
4 & & & &\frac1{20}-\frac1{2\pi}\ar4 &\frac18-\frac5{4\pi}\ar4 &
\begin{split}
\textstyle\Par{\frac14-\frac5{2\pi}\ar4}\cdot&\\
\textstyle\Par{\frac34+\frac3{2\pi}\ar6}&
\end{split} &
\begin{split}
\textstyle\Par{\frac14-\frac5{2\pi}\ar4}\cdot&\\
\textstyle\Par{\frac78+\frac{21}{4\pi}\ar6}&
\end{split} &? \\
\hline
5 & & & & &? &? &? &? \\
\hline
\end{array}\]
With approximate numerical values:
\[\def\arraystretch{1.6}\scriptsize\begin{array}{|c|c|c|c|c|c|c|c|c|c|}
\hline
\normalsize{\raisebox{-.2em}{$k$} \diagdown \raisebox{.2em}{$n$}} &$\normalsize{1}$ &$\normalsize{2}$ &$\normalsize{3}$ &$\normalsize{4}$ &$\normalsize{5}$ &$\normalsize{6}$ &$\normalsize{7}$ &$\normalsize{8}$ &$\normalsize{9}$ \\
\hline
$\normalsize{0}$ &$\small{.5}$ &$\small{.3333}$ &$\small{.25}$ &$\small{.2}$ &$\small{.1667}$ &$\small{.1429}$ &$\small{.125}$ &$\small{.1111}$ &$\small{.1}$ \\
\hline
$\normalsize{1}$ &$\small{.5}$ &$\small{.5}$ &$\small{.4561}$ &$\small{.4123}$ &$\small{.3743}$ &$\small{.3424}$ &$\small{.3154}$ &$\small{.2925}$ &$\small{.2727}$ \\
\hline
$\normalsize{2}$ & &$\small{.1667}$ &$\small{.25}$ &$\small{.2902}$ &$\small{.3089}$ &$\small{.3162}$ &$\small{.3173}$ &$\small{.3149}$ &$\small{.3105}$ \\
\hline
$\normalsize{3}$ & & &$\small{.04387}$ &$\small{.08774}$ &$\small{.1237}$ &$\small{.1519}$ &$\small{.1735}$ &$\small{.1902}$ &$\small{.203}$ \\
\hline
$\normalsize{4}$ & & & &$\small{.009785}$ &$\small{.02446}$ &$\small{.0406}$ &$\small{.0565}$ &$\small{.07139}$ &$\small{.08499}$ \\
\hline
$\normalsize{5}$ & & & & &$\small{.001922}$ &$\small{.005766}$ &$\small{.01101}$ &$\small{.01714}$ &$\small{.02374}$ \\
\hline
$\normalsize{6}$ & & & & & &3.407\cdot 10^{-4} &$\small{.001192}$ &$\small{.002575}$ &$\small{.004434}$ \\
\hline
$\normalsize{7}$ & & & & & & &5.541\cdot 10^{-5} &2.216\cdot 10^{-4} &5.34\cdot 10^{-4} \\
\hline
$\normalsize{8}$ & & & & & & & &8.369\cdot 10^{-6} &3.766\cdot 10^{-5} \\
\hline
$\normalsize{9}$ & & & & & & & & &1.185\cdot 10^{-6} \\
\hline
\end{array}\]

\subsection{The case $m=2$}

Some closed form values of $v_k(2,n)$ are:
\[\def\arraystretch{1.4}
\begin{array}{|c|c|c|c|c|c|c|}
\hline
\raisebox{-.2em}{$k$} \diagdown \raisebox{.2em}{$n$} &2 &3 &4 &5 &6 &7 \\
\hline
0 &\frac16 &\frac1{10} &\frac1{15} &\frac1{21} &\frac1{28} &\frac1{36} \\
\hline
1 &\frac12-\frac1{\pi}\ar3 &\frac38-\frac3{4\pi}\ar3 &? &? &? &? \\
\hline
2 &\frac13 &\frac38-\frac1{4\pi}\ar4 &\frac13 &? &? &? \\
\hline
3 &\frac1{\pi}\ar3 &\frac18+\frac3{4\pi}\ar3 &
\begin{split}
&\textstyle\frac14-\frac1{2\pi}\ar5\\
&\textstyle+\frac6{\pi^2}\ar3\ar5
\end{split} &
\begin{split}
&\textstyle\frac5{16}-\frac5{8\pi}\ar3\\
&\textstyle-\frac{5}{8\pi}\ar5+\frac{45}{4\pi^2}\ar3\ar5 
\end{split} &? &? \\
\hline
4 & &\frac1{40}+\frac1{4\pi}\ar4 &\frac1{10} &
\begin{split}
&\textstyle\frac3{16}-\frac5{8\pi}\ar4\\
&\textstyle-\frac1{8\pi}\ar6+\frac{15}{4\pi^2}\ar4\ar6
\end{split} &
\begin{split}
&\textstyle\frac14-\frac5{4\pi}\ar4 \\
&\textstyle+\frac{30}{4\pi^2}\ar4\ar6
\end{split} &? \\
\hline
5 & & &? &? &? &? \\
\hline
\end{array}\]
With approximate numerical values:
\[\def\arraystretch{1.6}\scriptsize\begin{array}{|c|c|c|c|c|c|c|c|c|}
\hline
\normalsize{\raisebox{-.2em}{$k$} \diagdown \raisebox{.2em}{$n$}} &$\normalsize{2}$ &$\normalsize{3}$ &$\normalsize{4}$ &$\normalsize{5}$ &$\normalsize{6}$ &$\normalsize{7}$ &$\normalsize{8}$ &$\normalsize{9}$ \\
\hline
$\normalsize{0}$ &$\small{.1667}$ &$\small{.1}$ &$\small{.06667}$ &$\small{.04762}$ &$\small{.03571}$ &$\small{.02778}$ &$\small{.02222}$ &$\small{.01818}$ \\
\hline
$\normalsize{1}$ &$\small{.3918}$ &$\small{.2939}$ &$\small{.2256}$ &$\small{.1781}$ &$\small{.1441}$ &$\small{.119}$ &$\small{.1}$ &$\small{.08535}$ \\
\hline
$\normalsize{2}$ &$\small{.3333}$ &$\small{.3549}$ &$\small{.3333}$ &$\small{.3015}$ &$\small{.2698}$ &$\small{.241}$ &$\small{.2158}$ &$\small{.194}$ \\
\hline
$\normalsize{3}$ &$\small{.1082}$ &$\small{.2061}$ &$\small{.2596}$ &$\small{.2828}$ &$\small{.2888}$ &$\small{.2852}$ &$\small{.2766}$ &$\small{.2655}$ \\
\hline
$\normalsize{4}$ & &$\small{.04511}$ &$\small{.1}$ &$\small{.1466}$ &$\small{.1816}$ &$\small{.2062}$ &$\small{.2225}$ &$\small{.2326}$ \\
\hline
$\normalsize{5}$ & & &$\small{.01488}$ &$\small{.03912}$ &$\small{.06612}$ &$\small{.09208}$ &$\small{.1153}$ &$\small{.1351}$ \\
\hline
$\normalsize{6}$ & & & &$\small{.004191}$ &$\small{.01291}$ &$\small{.02486}$ &$\small{.03856}$ &$\small{.05287}$ \\
\hline
$\normalsize{7}$ & & & & &$\small{.001049}$ &$\small{.003728}$ &$\small{.008072}$ &$\small{.01382}$ \\
\hline
$\normalsize{8}$ & & & & & &2.392\cdot 10^{-4} &9.65\cdot 10^{-4} &$\small{.002325}$ \\
\hline
$\normalsize{9}$ & & & & & & &5.042\cdot 10^{-5} &2.281\cdot 10^{-4} \\
\hline
$\normalsize{10}$ & & & & & & & &9.943\cdot 10^{-6} \\
\hline
\end{array}\]

\subsection{The case $m=3$}

Some closed form values of $v_k(3,n)$ are:
\[\def\arraystretch{1.4}
\begin{array}{|c|c|c|c|c|c|c|}
\hline
\raisebox{-.2em}{$k$} \diagdown \raisebox{.2em}{$n$} &3 &4 &5 &6 \\
\hline
0 &\frac1{20} &\frac1{35} &\frac1{56} &\frac1{84} \\
\hline
1 &? &? &? &? \\
\hline
2 &\frac38-\frac3{4\pi}\ar4 &? &? &? \\
\hline
3 &
\begin{split}
&\textstyle\frac3{16}+\frac9{8\pi}\ar3\\
&\textstyle+\frac3{8\pi}\ar5-\frac{27}{4\pi^2}\ar3\ar5
\end{split} &
\begin{split}
&\textstyle\frac14+\frac3{4\pi}\ar3\\
&\textstyle-\frac9{2\pi^2}\ar3\ar5
\end{split} &? &? \\
\hline
4 &\frac3{40}+\frac3{4\pi}\ar4 &\frac3{20}+\frac3{4\pi}\ar4-\frac3{2\pi^2}\ar4\ar6 &
\begin{split}
&\textstyle\frac7{32}+\frac5{16\pi}\ar4\\
&\textstyle-\frac3{16\pi}\ar6+\frac{15}{8\pi^2}\ar4\ar6
\end{split} &? \\
\hline
5 &? &? &? &? \\
\hline
\end{array}\]
With approximate numerical values:
\[\def\arraystretch{1.6}\scriptsize\begin{array}{|c|c|c|c|c|c|c|c|}
\hline
\normalsize{\raisebox{-.2em}{$k$} \diagdown \raisebox{.2em}{$n$}} &$\normalsize{3}$ &$\normalsize{4}$ &$\normalsize{5}$ &$\normalsize{6}$ &$\normalsize{7}$ &$\normalsize{8}$ &$\normalsize{9}$ \\
\hline
$\normalsize{0}$ &$\small{.05}$ &$\small{.02857}$ &$\small{.01786}$ &$\small{.0119}$ &$\small{.008333}$ &$\small{.006061}$ &$\small{.004545}$ \\
\hline
$\normalsize{1}$ &$\small{.187}$ &$\small{.1247}$ &$\small{.087}$ &$\small{.06308}$ &$\small{.04721}$ &$\small{.03628}$ &$\small{.0285}$ \\
\hline
$\normalsize{2}$ &$\small{.3147}$ &$\small{.2558}$ &$\small{.2047}$ &$\small{.1643}$ &$\small{.1333}$ &$\small{.1093}$ &$\small{.09071}$ \\
\hline
$\normalsize{3}$ &$\small{.2864}$ &$\small{.2999}$ &$\small{.2841}$ &$\small{.258}$ &$\small{.2302}$ &$\small{.2039}$ &$\small{.1803}$ \\
\hline
$\normalsize{4}$ &$\small{.1353}$ &$\small{.2039}$ &$\small{.2419}$ &$\small{.2574}$ &$\small{.2586}$ &$\small{.2516}$ &$\small{.2402}$ \\
\hline
$\normalsize{5}$ &$\small{.02653}$ &$\small{.07538}$ &$\small{.1247}$ &$\small{.1645}$ &$\small{.193}$ &$\small{.2114}$ &$\small{.222}$ \\
\hline
$\normalsize{6}$ & &$\small{.0117}$ &$\small{.03554}$ &$\small{.06502}$ &$\small{.09465}$ &$\small{.1214}$ &$\small{.1439}$ \\
\hline
$\normalsize{7}$ & & &$\small{.00428}$ &$\small{.01441}$ &$\small{.02923}$ &$\small{.04672}$ &$\small{.06512}$ \\
\hline
$\normalsize{8}$ & & & &$\small{.001365}$ &$\small{.005137}$ &$\small{.01152}$ &$\small{.02013}$ \\
\hline
$\normalsize{9}$ & & & & &3.915\cdot 10^{-4} &$\small{.001641}$ &$\small{.004049}$ \\
\hline
$\normalsize{10}$ & & & & & &1.029\cdot 10^{-4} &4.78\cdot 10^{-4} \\
\hline
$\normalsize{11}$ & & & & & & &2.514\cdot 10^{-5} \\
\hline
\end{array}\]

\subsection{The case $k=m+n-1$}

Approximate values of $v_{m+n-1}(m,n)$ (where $m\leq n$) are given by:
\[\def\arraystretch{1.6}\scriptsize\begin{array}{|c|c|c|c|c|c|c|c|c|c|}
\hline
\normalsize{\raisebox{-.2em}{$m$} \diagdown \raisebox{.2em}{$n$}} &$\normalsize{2}$ &$\normalsize{3}$ &$\normalsize{4}$ &$\normalsize{5}$ &$\normalsize{6}$ &$\normalsize{7}$ &$\normalsize{8}$ &$\normalsize{9}$ &$\normalsize{10}$ \\
\hline
$\normalsize{1}$ &$\small{.1667}$ &$\small{.04387}$ &$\small{.009785}$ &$\small{.001922}$ &3.407\cdot 10^{-4} &5.541\cdot 10^{-5} &8.369\cdot 10^{-6} &1.185\cdot 10^{-6} &1.583\cdot 10^{-7} \\
\hline
$\normalsize{2}$ &$\small{.1082}$ &$\small{.04511}$ &$\small{.01488}$ &$\small{.004191}$ &$\small{.001049}$ &2.392\cdot 10^{-4} &5.042\cdot 10^{-5} &9.943\cdot 10^{-6} &1.85\cdot 10^{-6} \\
\hline
$\normalsize{3}$ & &$\small{.02653}$ &$\small{.0117}$ &$\small{.00428}$ &$\small{.001365}$ &3.915\cdot 10^{-4} &1.029\cdot 10^{-4} &2.514\cdot 10^{-5} &5.766\cdot 10^{-6} \\
\hline
$\normalsize{4}$ & & &$\small{.006586}$ &$\small{.002989}$ &$\small{.001162}$ &4.011\cdot 10^{-4} &1.257\cdot 10^{-4} &3.636\cdot 10^{-5} &9.82\cdot 10^{-6} \\
\hline
$\normalsize{5}$ & & & &$\small{.00164}$ &7.578\cdot 10^{-4} &3.07\cdot 10^{-4} &1.119\cdot 10^{-4} &3.737\cdot 10^{-5} &1.159\cdot 10^{-5} \\
\hline
$\normalsize{6}$ & & & & &4.091\cdot 10^{-4} &1.913\cdot 10^{-4} &7.981\cdot 10^{-5} &3.029\cdot 10^{-5} &1.061\cdot 10^{-5} \\
\hline
$\normalsize{7}$ & & & & & &1.021\cdot 10^{-4} &4.818\cdot 10^{-5} &2.055\cdot 10^{-5} &8.046\cdot 10^{-6} \\
\hline
$\normalsize{8}$ & & & & & & &2.55\cdot 10^{-5} &1.211\cdot 10^{-5} &5.256\cdot 10^{-6} \\
\hline
$\normalsize{9}$ & & & & & & & &6.369\cdot 10^{-6} &3.042\cdot 10^{-6} \\
\hline
$\normalsize{10}$ & & & & & & & & &1.591\cdot 10^{-6} \\
\hline
\end{array}\]

\bigskip

\bibliographystyle{plain}

\end{document}